\newtheorem{theorem}{Theorem}[section]
\newtheorem{lemma}[theorem]{Lemma}
\newtheorem{corollary}[theorem]{Corollary}
\newtheorem{question}[theorem]{Question}
\theoremstyle{definition}
\newtheorem{definition}[theorem]{Definition}
\newtheorem{proposition}[theorem]{Proposition}
\newtheorem{example}[theorem]{Example}
\begin{document}

\title[The countable type properties in free paratopological groups]
{The countable type properties in free paratopological groups}

\author{Fucai Lin}
\address{(Fucai Lin): School of mathematics and statistics,
Minnan Normal University, Zhangzhou 363000, P. R. China}
\email{linfucai2008@aliyun.com}

\author{Chuan Liu}
\address{(Chuan Liu): Department of Mathematics,
Ohio University Zanesville Campus, Zanesville, OH 43701, USA}
\email{liuc1@ohio.edu}

\author{Kexiu Zhang}
\address{(Kexiu Zhang) School of mathematics and statistics,
Minnan Normal University, Zhangzhou 363000, P. R. China}
\email{kxzhang2006@163.com}

\thanks{The first author is supported by the NSFC (Nos. 11571158, 11471153),
  the Natural Science Foundation of Fujian Province (Nos. 2016J05014, 2016J01671, 2016J01672) of China.}

\keywords{free paratopological group; countable type; almost countable type; almost subcountable type.}
\subjclass[2000]{primary 22A30; secondary 54D10; 54E99; 54H99}

\begin{abstract}
A space $X$ is of {\it countable type} (resp. {\it subcountable type}) if every
compact subspace $F$ of $X$ is contained in a compact subspace
$K$ that is of countable character (resp. countable pseudocharacter) in $X$. In this paper, we mainly show that: (1) For a functionally Hausdorff space $X$, the free paratopological group $FP(X)$ and the free abelian paratopological group $AP(X)$ are of countable type if and only if $X$ is discrete; (2) For a functionally Hausdorff space $X$, if the free abelian paratopological group $AP(X)$ is of subcountable type then $X$ has countable pseudocharacter. Moreover, we also show that, for an arbitrary Hausdorff $\mu$-space $X$, if $AP_{2}(X)$ or $FP_{2}(X)$ is locally compact, then $X$ is homeomorphic to the topological sum of a compact space and a discrete space.
\end{abstract}

\maketitle

\section{Introduction}
{\bf All spaces are $T_0$ unless stated otherwise.} We denote by $\mathbb{N}$ the set of all natural
numbers, $\mathbb{Z}$ the set of integer and $\omega$ the first countable order. The letter $e$
denotes the neutral element of a group. For a space $X$, we always denote the set of all isolated points of $X$ by $\mbox{I(X)}$. Readers may consult
\cite{AT2008,E1989,Gr1984} for notations and terminology not
explicitly given here.

The subsequent work in this paper is at least in part provoked by the desire to understand
more fully the inter-relationship between the algebraic and topological conditions
imposed in such celebrated theorems about topological groups as the following:

\begin{theorem}{\bf Pontryagin Theorem} \cite{PO1973}
Each $T_0$ topological group is Tychonoff.
\end{theorem}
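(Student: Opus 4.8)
The plan is to extract the full Tychonoff property from the bare $T_0$ hypothesis in two stages: first upgrade the separation axiom to $T_1$ using the algebraic structure, and then manufacture continuous real-valued functions by hand, via the Birkhoff--Kakutani prenorm construction, exploiting the homogeneity of the group.

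First I would show that a $T_0$ topological group $G$ is automatically $T_1$. The key is the identity $\overline{\{e\}}=\bigcap\{V:V\text{ is a neighbourhood of }e\}$, valid in every topological group: a basic neighbourhood $xV$ of a point $x$ contains $e$ exactly when $x\in V^{-1}$, and $V\mapsto V^{-1}$ permutes the neighbourhood filter of $e$. In particular $N:=\overline{\{e\}}$ is a symmetric (indeed normal) subgroup. Given $x\neq e$, the $T_0$ axiom produces an open set containing exactly one of $e,x$; translating by $x^{-1}$ in the case where that open set captures $x$ rather than $e$, and using $N=N^{-1}$, I would deduce $x\notin N$. Hence $\overline{\{e\}}=\{e\}$ and $G$ is $T_1$; as every $T_1$ topological group is Hausdorff and regular, singletons are closed and every translation is a homeomorphism.

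By homogeneity it now suffices, given a closed set $F$ with $e\notin F$, to produce a continuous $f\colon G\to[0,1]$ with $f(e)=0$ and $f|_{F}\equiv 1$ (a general point $a\notin F'$ is handled by translating $F'$ to $a^{-1}F'$). Since $G\setminus F$ is an open neighbourhood of $e$, continuity of the group operations at the identity lets me recursively choose symmetric open neighbourhoods $U_0=G\supseteq U_1\supseteq U_2\supseteq\cdots$ of $e$ with $U_1\cap F=\emptyset$ and with the cube condition $U_{n+1}U_{n+1}U_{n+1}\subseteq U_n$ for all $n$. I then define the prenorm
\[
\rho(x)=\inf\Bigl\{\sum_{i=1}^{k}2^{-n_i}:x\in U_{n_1}U_{n_2}\cdots U_{n_k}\Bigr\}.
\]
Symmetry $\rho(x)=\rho(x^{-1})$ and subadditivity $\rho(xy)\le\rho(x)+\rho(y)$ are immediate from reversing and concatenating admissible products, and the upper inclusion $U_n\subseteq\{x:\rho(x)\le 2^{-n}\}$ is read off from single-factor representations. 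Subadditivity gives $|\rho(x)-\rho(y)|\le\rho(xy^{-1})$, which together with the upper inclusion (forcing $\rho\to 0$ along the neighbourhood filter of $e$) makes $\rho$ continuous. Finally $\rho(e)=0$ and, by the lower inclusion below, $\rho\ge\tfrac12$ on $G\setminus U_1\supseteq F$; then $f=\min(2\rho,1)$ is the desired function, establishing complete regularity at $e$ and hence, by translation, everywhere, so $G$ is Tychonoff.

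The separation-axiom bookkeeping and the closing normalisation are routine; the real obstacle is the \emph{lower} inclusion $\{x:\rho(x)<2^{-n}\}\subseteq U_n$, which is what prevents $\rho$ from collapsing to something useless. It reduces to the combinatorial lemma that $\sum_{i=1}^{k}2^{-n_i}<2^{-n}$ implies $U_{n_1}U_{n_2}\cdots U_{n_k}\subseteq U_n$, proved by induction on $k$. The idea is to cut the product at the first index whose partial sum reaches $2^{-n-1}$, so that the head before it has weight $<2^{-n-1}$ and the tail after it has weight $<2^{-n}-2^{-n-1}=2^{-n-1}$; both therefore lie in $U_{n+1}$ by the inductive hypothesis, as does the single factor at the cut (its weight is $<2^{-n}$, so the corresponding $U_{n_i}$ already sits in $U_{n+1}$), whereupon the cube condition $U_{n+1}^3\subseteq U_n$ recombines the three pieces. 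Carrying out this induction cleanly—this is precisely where the cube condition is indispensable—is the technical heart of the argument.
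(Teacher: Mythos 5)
The paper does not prove this statement at all: it is quoted as classical background (Pontryagin's theorem) with a citation to \cite{PO1973}, so there is no in-paper argument to compare against. Your proof is the standard one (essentially the prenorm construction of Birkhoff--Kakutani as presented, e.g., in Arhangel'ski\v{\i}--Tkachenko), and I find it correct. The $T_0\Rightarrow T_1$ step is right: $\overline{\{e\}}=\bigcap\{V\}$ over neighbourhoods $V$ of $e$ because $e\in xV\Leftrightarrow x\in V^{-1}$ and inversion permutes the neighbourhood filter, and the two $T_0$ cases (translating by $x^{-1}$ and using $N=N^{-1}$) do give $\overline{\{e\}}=\{e\}$. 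The prenorm $\rho$ is well defined and bounded by $1$ since $U_0=G$; symmetry, subadditivity, and the upper inclusion $U_n\subseteq\{\rho\le 2^{-n}\}$ are as you say, and $|\rho(x)-\rho(y)|\le\rho(xy^{-1})$ together with the upper inclusion yields continuity. Your chaining lemma ($\sum 2^{-n_i}<2^{-n}$ forces $U_{n_1}\cdots U_{n_k}\subseteq U_n$) is correctly identified as the crux, and the cut-at-the-first-index-reaching-$2^{-n-1}$ induction works, including the degenerate cases where the head or tail is empty (then one simply uses $e\in U_{n+1}$ and $U_{n+1}^3\subseteq U_n$). Two cosmetic remarks: the aside that every $T_1$ topological group is ``Hausdorff and regular'' is not needed for the rest of the argument (and is itself a consequence of what you prove), and note that the construction of symmetric $U_n$ genuinely uses continuity of inversion, which is exactly why this theorem fails for paratopological groups --- the contrast the paper is drawing.
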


\begin{theorem}{\bf Birkhoff-Kakutani Theorem} \cite{BG1936,KS1936}
Each first-countable topological group is metrizable
\end{theorem}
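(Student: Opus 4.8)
The plan is to prove metrizability by explicitly constructing a left-invariant metric on the group $G$ whose induced topology coincides with the given group topology. First I would fix a countable base $\{W_n : n \in \omega\}$ of open neighborhoods of the neutral element $e$. Using continuity of the map $(x,y) \mapsto xy^{-1}$ at $e$ together with this countable base, I would inductively extract a sequence $\{U_n : n \in \omega\}$ of open symmetric neighborhoods of $e$ with $U_0 = G$, $U_{n+1}^3 \subseteq U_n$, and $U_{n+1} \subseteq W_n$; the last inclusion guarantees that $\{U_n\}$ is again a neighborhood base at $e$. By Pontryagin's Theorem the group $G$ is Tychonoff, in particular Hausdorff, so that $\bigcap_{n} U_n = \{e\}$.

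Next I would define an auxiliary gauge $\varphi \colon G \to [0,1]$ by $\varphi(x) = \inf\{2^{-n} : x \in U_n\}$, with the understanding that $\varphi(x)=1$ exactly on $U_0 \setminus U_1$ and $\varphi(x)=0$ only on $\bigcap_n U_n = \{e\}$. Since each $U_n$ is symmetric, $\varphi(x)=\varphi(x^{-1})$. From $\varphi$ I would build a left-invariant pseudometric by the chain construction
\[
d(x,y) = \inf\left\{ \sum_{i=0}^{k} \varphi(z_i^{-1} z_{i+1}) : x = z_0, z_1, \dots, z_{k+1} = y,\ k \in \omega \right\}.
\]
The triangle inequality for $d$ is immediate from concatenating chains, symmetry follows from symmetry of $\varphi$, and left invariance holds because each summand depends only on $z_i^{-1} z_{i+1}$.

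The crucial technical step, and the one I expect to be the main obstacle, is the two-sided estimate
\[
\tfrac{1}{2}\,\varphi(x^{-1}y) \le d(x,y) \le \varphi(x^{-1}y).
\]
The upper bound is trivial, obtained from the one-step chain. The lower bound must be proved by induction on the chain length $k$, and this is precisely where the condition $U_{n+1}^3 \subseteq U_n$ enters: one shows that a chain with total gauge-sum $s$ forces the product $z_0^{-1} z_{k+1}$ to lie in a $U_n$ whose value $2^{-n}$ is bounded by $2s$. Getting the bookkeeping of this induction right, so that the cube condition absorbs the accumulation of three consecutive steps, is the delicate part.

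Once the estimate is established, the remainder follows quickly. The equality $d(x,y) = 0$ forces $\varphi(x^{-1}y) = 0$, hence $x = y$, so $d$ is a genuine metric; and because the $d$-balls are squeezed between the translated neighborhoods $x U_n$ up to the constant factor $2$, the metric topology and the group topology share the same neighborhood base at each point. Therefore $d$ metrizes $G$, which completes the proof.
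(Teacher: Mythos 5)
The paper does not prove this classical theorem at all --- it is stated as background and attributed to Birkhoff and Kakutani --- and your argument is exactly the standard proof from those sources: symmetric neighborhoods with $U_{n+1}^{3}\subseteq U_{n}$ refining a countable base, the gauge $\varphi$, the left-invariant chain pseudometric, and the two-sided estimate $\tfrac{1}{2}\varphi(x^{-1}y)\le d(x,y)\le\varphi(x^{-1}y)$. Your outline is correct, and you correctly identify both the role of Pontryagin's theorem (to get $\bigcap_{n}U_{n}=\{e\}$ under the paper's $T_{0}$ convention) and the one genuinely delicate point, namely the induction on chain length in which the cube condition absorbs three consecutive factors; that lemma is only sketched rather than carried out, but the statement you give of it ($2^{-n}\le 2s$) is the right one and the induction closes exactly as you describe.
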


\begin{theorem}\cite{RW1981}
A topological group $G$ is a paracompact $p$-space if and only if there exists a compact subset that is of countable character in $G$.
\end{theorem}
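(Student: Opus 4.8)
The plan is to prove the equivalence by routing through the intermediate characterization that $G$ is a paracompact $p$-space if and only if $G$ contains a compact subgroup $N$ for which the coset space $G/N$ is metrizable. Throughout I would use that a $T_0$ topological group is Tychonoff (the Pontryagin Theorem above), so that compact subsets of $G$ are closed and $G$ admits Hausdorff compactifications; that a Tychonoff space is a paracompact $p$-space exactly when it admits a perfect map onto a metrizable space; that for a closed subgroup $N$ the quotient map $\pi\colon G\to G/N$ is open and, when $N$ is compact, perfect; and that a first-countable coset space of a topological group is metrizable (the coset-space form of the Birkhoff--Kakutani Theorem above).

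For the easier implication, suppose $G$ is a paracompact $p$-space and fix a perfect map $f\colon G\to M$ onto a metrizable space $M$. I would set $K=f^{-1}(f(e))$, which is compact because $f$ is perfect. If $\{V_n\}$ is a countable neighbourhood base at $f(e)$ in $M$, then, since $f$ is closed, the sets $f^{-1}(V_n)$ form a countable neighbourhood base of the fibre $K$ in $G$: whenever $K=f^{-1}(f(e))\subseteq W$ with $W$ open, closedness of $f$ gives the open set $V=M\setminus f(G\setminus W)\ni f(e)$ with $f^{-1}(V)\subseteq W$, and then some $V_n\subseteq V$. Thus $K$ is a compact set of countable character.

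The substantive implication is the converse. Assuming $G$ contains a compact set $K$ of countable character, I would first translate by an element of $K$ to arrange $e\in K$, and then, using regularity of $G$ and compactness of $K$, fix a decreasing neighbourhood base $\{O_n\}$ of $K$ with $\overline{O_{n+1}}\subseteq O_n$, so that $\bigcap_nO_n=\bigcap_n\overline{O_n}=K$. Next I would choose symmetric open neighbourhoods $V_n$ of $e$ recursively with $V_{n+1}^2\subseteq V_n$ and $KV_n\subseteq O_n$ (the latter possible by compactness of $K$), and set $N=\bigcap_nV_n$. The relations $V_{n+1}^2\subseteq V_n$ make $N$ a subgroup, and since $\overline{V_{n+1}}\subseteq V_{n+1}^2\subseteq V_n$ one has $N=\bigcap_n\overline{V_n}$, so $N$ is closed; as $e\in K$ gives $V_n\subseteq KV_n\subseteq O_n$, one gets $N\subseteq\bigcap_nO_n=K$, whence $N$ is a closed subset of the compact set $K$ and is therefore a compact subgroup. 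It then remains to show $G/N$ is metrizable. Since $\pi$ is open and $\pi(V_n)\subseteq\mathcal O$ precisely when $V_n\subseteq\pi^{-1}(\mathcal O)$, first-countability of $G/N$ at $\pi(e)$ reduces to showing that $\{V_n\}$ is a neighbourhood base of $N$ in $G$, which by homogeneity of the coset space and the Birkhoff--Kakutani Theorem yields metrizability of $G/N$. Finally, the perfect map $\pi\colon G\to G/N$ exhibits $G$ as a perfect preimage of a metrizable space; since perfect preimages of metrizable spaces are paracompact $p$-spaces, this completes the proof.

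I expect the one genuinely delicate point to be precisely that base property: establishing that the decreasing sequence $\{V_n\}$, which a priori satisfies only $\bigcap_nV_n=N$, is in fact a neighbourhood base of the compact subgroup $N$. This is false for arbitrary decreasing sequences converging to a compact set, so the group structure and the squeezing $KV_n\subseteq O_n$ must enter essentially. The approach I would take is to argue by contradiction: if some open $U\supseteq N$ contained no $V_n$, I would pick $x_n\in V_n\setminus U$ and take a cluster point $p$ of $(x_n)$ in a Hausdorff compactification, or in the Raikov completion $\varrho G$, of $G$; the conditions $V_{n+1}^2\subseteq V_n$ and $\overline{O_{n+1}}\subseteq O_n$, together with $KV_n\subseteq O_n$ and compactness of $K$, should force $p$ into $\bigcap_n\overline{V_n}=N\subseteq U$, contradicting $x_n\notin U$. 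Making this clustering argument close up inside $G$, rather than escaping into the remainder of the compactification, is the crux where the completeness afforded by the group structure is indispensable, and it is the step on which I would spend the most care.
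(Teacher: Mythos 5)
The paper offers no proof of this theorem---it is cited from Roelcke and Dierolf---so there is nothing internal to compare against except the closely related Proposition~\ref{p3}, which carries out the same construction for paratopological groups. Your argument is correct and is essentially the classical one: the forward direction via fibres of a perfect map onto a metric space is routine, and the converse builds the compact subgroup $N=\bigcap_n V_n\subseteq K$ and passes to the perfect quotient $G\to G/N$ with $G/N$ metrizable. The one step you flag as delicate does close up, and more easily than you anticipate: no compactification or Raikov completion is needed. If $U\supseteq N$ is open and $x_n\in V_n\setminus U$ for all $n$, then $x_n\in V_n\subseteq KV_n\subseteq O_n$, so every neighbourhood of $K$ contains all but finitely many of the $x_n$; by compactness of $K$ the sequence has a cluster point $p\in K$ (otherwise finitely many open sets, each containing $x_n$ for only finitely many $n$, would cover $K$, and their union would be a neighbourhood of $K$ missing a tail of the sequence). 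Since the tail $\{x_m: m>n\}$ lies in $V_{n+1}$ and $\overline{V_{n+1}}\subseteq V_{n+1}^2\subseteq V_n$, the point $p$ lies in $\bigcap_n V_n=N\subseteq U$; but $p$ also lies in the closed set $G\setminus U$ containing every $x_n$, a contradiction. So the clustering takes place inside $K\subseteq G$ and the completeness of the group plays no role. Alternatively, you can sidestep proving that the particular sequence $\{V_n\}$ is a base at $N$ altogether, which is exactly what the paper does in its Proposition~\ref{p3}: $N$ is a compact $G_{\delta}$-set in the compact Hausdorff space $K$, hence $\chi(N,K)\leq\omega$, and the inequality $\chi(N,G)\leq\chi(N,K)\cdot\chi(K,G)$ from \cite[3.1.E]{E1989} yields some countable neighbourhood base at $N$ in $G$, which is all that the metrizability of $G/N$ requires.
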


\begin{theorem}\cite{Ar1981}
A topological group $G$ is of countable pseudocharacter if and only if it is submetrizable.
\end{theorem}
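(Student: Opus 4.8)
The plan is to prove the two implications separately: submetrizability $\Rightarrow$ countable pseudocharacter is soft and uses no algebra, whereas the converse is where the group structure is essential. Throughout, I use that $G$ has countable pseudocharacter iff (by homogeneity) the neutral element satisfies $\{e\}=\bigcap_{n\in\mathbb{N}}V_n$ for open sets $V_n$, and that $G$ is submetrizable iff it admits a continuous metric, equivalently a continuous bijection onto a metric space.

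First I would dispatch the easy implication. If $f\colon G\to(M,\sigma)$ is a continuous bijection onto a metric space, then in $(M,\sigma)$ every singleton is a $G_\delta$-set, since $\{f(x)\}=\bigcap_{n\in\mathbb{N}}\{y:\sigma(y,f(x))<1/n\}$; taking preimages of these countably many open sets gives $\{x\}=f^{-1}(\{f(x)\})$ as a $G_\delta$-set, so $\psi(G)\le\omega$. This argument is valid for arbitrary spaces.

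For the converse, suppose $\{e\}=\bigcap_n V_n$ with each $V_n$ open. I would first refine to a better base: using continuity of $(x,y)\mapsto xy^{-1}$, recursively choose symmetric open neighbourhoods $U_n$ of $e$ with $U_0=G$ and $U_{n+1}U_{n+1}U_{n+1}\subseteq U_n\cap V_n$, so that $\bigcap_n U_n=\{e\}$ is preserved. The heart of the proof is the Birkhoff--Kakutani chaining construction. Put $w(x)=2^{-n}$ for $x\in U_n\setminus U_{n+1}$ and $w(x)=0$ for $x\in\bigcap_n U_n$, and define
\[
d(x,y)=\inf\Big\{\sum_{i=1}^{k}w\big(x_{i-1}^{-1}x_i\big):\ x=x_0,x_1,\dots,x_k=y\Big\}.
\]
One checks that $d$ is a left-invariant pseudometric (symmetry from $U_n=U_n^{-1}$, the triangle inequality by concatenating chains), and then proves the key trapping estimate $\tfrac12 w(x^{-1}y)\le d(x,y)\le w(x^{-1}y)$. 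The upper bound is immediate from the one-step chain. Granting the estimate, the inclusion $U_n\subseteq\{x:d(e,x)\le 2^{-n}\}$ shows the metric topology $\tau_d$ is coarser than the topology of $G$ (so $d$ is continuous), while $d(e,x)=0\Rightarrow w(x)=0\Rightarrow x\in\bigcap_n U_n=\{e\}$ shows $d$ separates points; hence $d$ is a continuous metric and $G$ is submetrizable.

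I expect the main obstacle to be the lower bound $d(x,y)\ge\tfrac12 w(x^{-1}y)$, equivalently $w(x_0^{-1}x_k)\le 2\sum_i w(x_{i-1}^{-1}x_i)$ for every chain, which is the only place the cube condition $U_{n+1}^3\subseteq U_n$ is genuinely used. This is proved by induction on the length $k$: one splits the chain at the step where the running partial sum first exceeds half the total sum $S$, applies the inductive hypothesis to the two halves (each of sum at most $S/2$) and notes the middle step also has weight at most $S$, so all three pieces lie in a common $U_{n+1}$; the cube condition then places their product in $U_n$, giving $w(x_0^{-1}x_k)\le 2^{-n}\le 2S$. In other words, no multi-step chain can undercut a single step by more than a factor of two. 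Once this estimate is secured, the remainder of the argument is routine.
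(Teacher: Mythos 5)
The paper offers no proof of this statement---it is quoted from \cite{Ar1981} as background---so there is nothing internal to compare against; judged on its own, your argument is correct and is the standard one. Both halves check out: the easy direction is as you say, and the converse is precisely the Birkhoff--Kakutani prenorm construction (cf.\ \cite{AT2008}), with the key chaining estimate $w(x_0^{-1}x_k)\le 2\sum_{i}w(x_{i-1}^{-1}x_i)$ established exactly by the half-sum splitting you describe.
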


A {\it paratopological group} $G$ is a group $G$ with a topology such that
the product map of $G \times G$ into $G$ is continuous. If $G$ is a paratopological group  and the inverse operation of $G$ is continuous, then $G$ is called a {\it topological group}.  However, there exists a paratopological group which is not a
topological group; Sorgenfrey line (\cite[Example
1.2.2]{E1989}) is such an example.  Paratopological groups were discussed and many results have been obtained in \cite{AR2005,AT2008,EN2012,EN2013,Lin,LFC2012,Lin2012,LC,LC1,LL2010,PN2006,RS}.

\begin{definition}\cite{RS}
Let $X$ be a subspace of a paratopological group $G$. Assume that
\begin{enumerate}
\item The set $X$ generates $G$ algebraically, that is $\langle X\rangle=G$;

\item  Each continuous mapping $f: X\rightarrow H$ to a paratopological group $H$ extends to a continuous homomorphism $\hat{f}: G\rightarrow H$.
\end{enumerate}
Then $G$ is called the {\it Markov free paratopological group on} $X$ and is denoted by $FP(X)$.
\end{definition}

Again, if all the groups in the above definitions are Abelian, then we get the definition of the {\it Markov free Abelian paratopological group on} $X$ which is denoted by $AP(X)$.

Finally, we recall and introduce some concepts.

\begin{definition}
Let $X$ be a space. Then
\begin{enumerate}
\item $X$ is of {\it countable type} \cite{E1989} if every
compact subspace $F$ of $X$ is contained in a compact subspace
$K$ that is of countable character in $X$;

\item $X$ is of {\it pointwise countable type} \cite{E1989} if every
point of $X$ is contained in a compact subspace
$K$ that is of countable character in $X$;

\item $X$ is of {\it subcountable type} \cite{VA1970} if every
compact subspace $F$ of $X$ is contained in a compact $G_{\delta}$-subspace $K$ of $X$;

\item $X$ is of {\it pointwise subcountable type} \cite{VA1970} if every
point of $X$ is contained in a compact $G_{\delta}$-subspace $K$ of $X$;

\item $X$ is called {\it almost countable type} if it contains a non-empty compact set $K$ that is of countable character in $X$;

\item $X$ is called {\it almost subcountable type} if it contains a non-empty compact set $K$ that is a $G_{\delta}$-set in $X$.
\end{enumerate}
\end{definition}

It is easy to see that

\setlength{\unitlength}{1cm}
\begin{picture}(15,2)\thicklines
 \put(12,0){\makebox(0,0){almost subcountable type}}
 \put(2.7,1.4){\vector(1,0){1}}
  \put(8.4,1.4){\vector(1,0){1}}
  \put(8.8,0){\vector(1,0){1}}
 \put(1.3,0){\makebox(0,0){subcountable type}}
 \put(2.9,0){\vector(1,0){1}}
 \put(1,1.2){\vector(0,-1){1}}
 \put(5,1.2){\vector(0,-1){1}}
 \put(11,1.2){\vector(0,-1){1}}
 \put(1.3,1.4){\makebox(0,0){countable type}}
 \put(6,1.4){\makebox(0,0){pointwise countable type}.}
 \put(6.4,0){\makebox(0,0){pointwise subcountable type}}
 \put(12,1.4){\makebox(0,0){almost countable type}.}
\end{picture}

\bigskip

{\bf Note} (1) In general, a space is of subcountable type need not be of countable type. For example, every space with a countable network is of subcountable type, but it need not be of countable type (take a countable dense subspace of $D^{\omega_{1}}$).

(2) There exists a pointwise countable type space $X$ which is not of countable type, see \cite[Example 2.9]{VA1970}.

(3) There exists a pointwise subcountable type space $X$ which is not of pointwise countable type, see Example~\ref{e0}.

(4) There exists an almost subcountable type space $X$ which is not of almost countable type, see Example~\ref{e1}.

(5) There exists an almost subcountable type space $X$ which is not of pointwise subcountable type, see Example~\ref{e2}.

(6) There exists a pointwise subcountable type space $X$ which is not of subcountable type, see \cite[Example 7.4]{VA1970}.

(7) In \cite{VA1970}, pointwise subcountable type and subcountable type  are named pseudopoint-countable type and pseudocountable type, respectively.

\section{Some properties of almost countable type spaces or almost subcountable type spaces}
The following proposition is easy to check.

\begin{proposition}
The product $X=\prod_{n\in\mathbb{N}}X_{n}$ of countably many almost countable type spaces (resp. almost subcountable type spaces)
 is of almost countable type (resp. almost subcountable type).
\end{proposition}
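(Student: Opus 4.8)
The plan is to take as witness the product of the witnessing compact sets. For each $n\in\mathbb{N}$ fix a non-empty compact set $K_n\subseteq X_n$ that is of countable character (resp.\ a $G_\delta$-set) in $X_n$, and set $K=\prod_{n\in\mathbb{N}}K_n$. By Tychonoff's theorem $K$ is compact, and it is non-empty since each $K_n$ is. It therefore remains only to check that $K$ is of countable character (resp.\ is a $G_\delta$-set) in $X=\prod_{n\in\mathbb{N}}X_n$.

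I would dispatch the almost-subcountable-type case first, as it is the more direct one. Writing $K_n=\bigcap_{m\in\mathbb{N}}U_{n,m}$ with each $U_{n,m}$ open in $X_n$, and letting $\pi_n\colon X\to X_n$ denote the projection, we have $K=\bigcap_{n}\pi_n^{-1}(K_n)=\bigcap_{n}\bigcap_{m}\pi_n^{-1}(U_{n,m})$. Each $\pi_n^{-1}(U_{n,m})$ is open in $X$, so $K$ is a countable intersection of open sets, i.e.\ a $G_\delta$-set. Combined with compactness, this shows $X$ is of almost subcountable type.

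For the almost-countable-type case, fix for each $n$ a countable outer base $\{U_{n,m}:m\in\mathbb{N}\}$ of $K_n$ in $X_n$ (so each $U_{n,m}$ is open and contains $K_n$). The key step is to reduce an arbitrary open $O\supseteq K$ to a basic box neighbourhood: since $K=\prod_n K_n$ is compact, Wallace's theorem yields open sets $V_n\supseteq K_n$ with $V_n=X_n$ for all but finitely many $n$ and $\prod_n V_n\subseteq O$. For each $n$ with $V_n\neq X_n$ choose $m(n)$ with $K_n\subseteq U_{n,m(n)}\subseteq V_n$, and for the remaining indices set $W_n=X_n$; then the box $\prod_n W_n$ (with $W_n=U_{n,m(n)}$ on the finite support) satisfies $K\subseteq\prod_n W_n\subseteq O$. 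Hence the family of all boxes $\prod_n W_n$, where each $W_n$ is either $X_n$ or some $U_{n,m}$ and $W_n=X_n$ for all but finitely many $n$, is an outer base for $K$ in $X$. This family is a countable union, over the finite support, of countable sets, hence countable, so $K$ is of countable character and $X$ is of almost countable type.

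The only genuinely non-formal point is the reduction of a general neighbourhood of $K$ to a finite-support box, which is exactly the content of the countably-indexed form of Wallace's theorem (and holds without any separation hypotheses on the factors); everything else is a bookkeeping count verifying that the resulting outer base is countable. Thus I expect this step to be the main, and indeed the only, obstacle, and it is a standard one.
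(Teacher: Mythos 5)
Your proof is correct and is exactly the argument the paper has in mind (the paper states this proposition without proof, remarking only that it is "easy to check"): take the product of the witnessing compact sets, use Tychonoff for compactness, preimages of projections for the $G_\delta$ case, and the Wallace theorem for arbitrary products (\cite[Theorem 3.2.10]{E1989}) to reduce a neighborhood of $K$ to a finite-support box in the countable-character case. Nothing is missing.
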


Let $X, Y$ be two spaces. The mapping $f: X\rightarrow Y$ is {\it perfect} if $f$ is a closed continuous mapping and $f^{-1}(y)$ is compact for each $y\in Y$.

\begin{proposition}\label{p6}
If $f: X\rightarrow Y$ is a perfect mapping from a space $X$ onto an almost countable type space (resp. an almost subcountable type space) $Y$, then $X$ is of almost countable type (resp. almost subcountable type).
\end{proposition}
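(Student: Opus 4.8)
The plan is to prove Proposition 2.2 by pulling back the compact witness set from $Y$ through the perfect map $f$. Recall that $Y$ being of almost countable type means there is a non-empty compact $K \subseteq Y$ of countable character in $Y$, and almost subcountable type means there is a non-empty compact $K \subseteq Y$ that is a $G_\delta$-set in $Y$. In both cases the natural candidate witness in $X$ is the full preimage $f^{-1}(K)$. Since $f$ is perfect, the restriction $f|_{f^{-1}(K)} : f^{-1}(K) \to K$ is a perfect map onto the compact space $K$; because the preimage of a compact set under a perfect map is compact, $f^{-1}(K)$ is a non-empty compact subspace of $X$. So the existence of the compact piece is immediate, and the real content is transferring the character (or $G_\delta$) condition.

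For the almost subcountable type case, the transfer is the easy one: if $K = \bigcap_{n \in \mathbb{N}} U_n$ with each $U_n$ open in $Y$, then $f^{-1}(K) = \bigcap_{n \in \mathbb{N}} f^{-1}(U_n)$ is an intersection of countably many open subsets of $X$ by continuity of $f$, hence $f^{-1}(K)$ is a compact $G_\delta$-subset of $X$, so $X$ is of almost subcountable type.

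For the almost countable type case I would argue that $f^{-1}(K)$ has countable character in $X$. Suppose $\{V_n : n \in \mathbb{N}\}$ is a countable base of open neighborhoods of $K$ in $Y$. The plan is to show $\{f^{-1}(V_n) : n \in \mathbb{N}\}$ is a base of neighborhoods of $f^{-1}(K)$ in $X$. Each $f^{-1}(V_n)$ is an open neighborhood of $f^{-1}(K)$. Given any open $W \supseteq f^{-1}(K)$ in $X$, I would exploit closedness of $f$: the set $Y \setminus f(X \setminus W)$ is open in $Y$ and contains $K$ (here one checks that a point $y$ lies in this set exactly when its fibre $f^{-1}(y)$ is contained in $W$, and every fibre of a point of $K$ is inside $f^{-1}(K) \subseteq W$). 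Hence some $V_n \subseteq Y \setminus f(X \setminus W)$, and then $f^{-1}(V_n) \subseteq W$. This establishes the countable character of $f^{-1}(K)$.

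The main obstacle, and the step deserving the most care, is the closed-map argument that $Y \setminus f(X \setminus W)$ is an open neighborhood of $K$ lying inside $W$ under preimage; this is precisely the standard lemma that perfect (indeed closed) maps pull neighborhoods of compact fibres back correctly, and it must be invoked for each fibre of $K$ simultaneously rather than just one point. Everything else—compactness of $f^{-1}(K)$ and the countable indexing—is routine once this neighborhood-tube property is in hand.
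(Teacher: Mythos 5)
Your proof is correct and follows essentially the same strategy as the paper: pull back the compact witness $K$ and its countable neighborhood base (resp.\ its $G_\delta$ representation) through $f$, using perfectness to get compactness of $f^{-1}(K)$ and closedness of $f$ to verify that the preimages $f^{-1}(V_n)$ form a base at $f^{-1}(K)$. The only cosmetic difference is that you establish the base property in one step via the co-image $Y\setminus f(X\setminus W)$, whereas the paper argues fibrewise (choosing $W_y$ for each $y\in K$) and then uses a covering argument; both rest on the same closed-map principle.
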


\begin{proof}
Since $Y$ is of almost countable type, there exist a compact subset $K$ in $Y$ and a sequence of open neighborhoods $\{U_{n}: n\in\mathbb{N}\}$ that is a neighborhood base for $Y$ at $K$. Then $f^{-1}(K)$ is compact in $X$ since $f$ is a perfect mapping \cite[Theorem 3.7.2]{E1989}. We claim that $\{f^{-1}(U_{n}): n\in\mathbb{N}\}$ is a neighborhood base for $X$ at $f^{-1}(K)$. Indeed, let $U$ be an arbitrary open neighborhood of $f^{-1}(K)$ in $X$. For each $y\in K$, since $f$ is a closed mapping, there exists an open neighborhood $W_{y}$ of $y$ in $Y$ such that $f^{-1}(W_{y})\subset U$. Then the family $\{W_{y}: y\in K\}$ covers $K$, and hence there exists $n\in \mathbb{N}$ such that $K\subset U_{n}\subset\bigcup\{W_{y}: y\in K\}$. Therefore, we have $$f^{-1}(K)\subset f^{-1}(U_{n})\subset f^{-1}(\bigcup \{W_{y}: y\in K\})=\bigcup\{f^{-1}(W_{y}): y\in K\})\subset U.$$

Obviously, if $Y$ is of almost subcountable type, then so $X$ is .
\end{proof}

\begin{example}\label{e0}
There exists a pointwise subcountable type space $G$ which  is not of pointwise countable type.
\end{example}

\begin{proof}
Let $X$ be a non-discrete and non-compact metrizable space. Then the free topological group $G=F(X)$ is submetrizable \cite{AT2008}, hence it is of pointwise subcountable type. We claim that $G$ is not of pointwise countable type. Suppose not, it follows from \cite{T2003} that $X$ is compact or discrete, which is a contradiction.
\end{proof}

\begin{example}\label{e1}
There exists an almost subcountable type paratopological group $G$ which is not of almost countable type.
\end{example}

\begin{proof}
Let $X$ be a Tychonoff submetrizable non-discrete space. Then the free paratopological group $G=FP(X)$ over $X$ is submetrizable \cite{PN2006}, and hence it is of almost subcountable type. It follows from Theorem~\ref{t2} that $FP(X)$ is not of almost countable type.
\end{proof}

A space is a {\it functionally Hausdorff space} if for two arbitrary distinct points $x$ and $y$ there is a continuous real-valued mapping $f$ on $X$ such that $f(x)\neq f(y)$.

\begin{example}\label{e2}
There exists an almost subcountable type space $G$ which is not of pointwise subcountable type.
\end{example}

\begin{proof}
Let $X$ be a functionally Hausdorff space with uncountable pseudocharacter. It follows from Theorem~\ref{t3} that $AP(X)$ is not of almost subcountable type. Let $G$ be the topological sum of $AP(X)$ and a discrete space $Y$, where $Y\cap AP(X)=\emptyset$. Then $G$ is of almost subcountable type. However, it is obvious that $G$ is not of pointwise subcountable type.
\end{proof}

\section{Some properties of almost countable type paratopological groups}
Firstly, we show that the properties of countable type, pointwise countable type and almost countable type are consistent in the class of paratopological groups.

In a topological
group, the property of almost countable type is equivalent to the well known condition of almost metrizability. Therefore, it is interesting to consider the property of almost countable type in paratopological groups. Indeed, we have the following proposition.

\begin{proposition}\label{p1}
Let $G$ be a paratopological group. Then
$G$ is of almost countable type if and only if it is of countable type.
\end{proposition}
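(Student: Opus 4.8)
The plan is to prove the non-trivial implication, that almost countable type implies countable type, by exploiting the homogeneity coming from left translations together with the joint continuity of multiplication; the reverse implication is immediate. Indeed, if $G$ is of countable type, then the nonempty compact set $\{e\}$ is contained in some compact set $K$ of countable character, and this $K$ witnesses that $G$ is of almost countable type.

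For the forward direction, I would first normalize the given compact set. By assumption there is a nonempty compact set $K_0\subseteq G$ of countable character. Choosing any $x_0\in K_0$ and using that the left translation $L_{x_0^{-1}}$ is a homeomorphism of $G$, the set $K=x_0^{-1}K_0$ is again compact and of countable character, and now $e\in K$. Replacing the given countable outer base by the intersections of its initial segments, I may fix a decreasing sequence $\{U_n : n\in\mathbb{N}\}$ of open sets forming an outer base at $K$. Given an arbitrary compact set $F\subseteq G$, the set $FK$ is compact, being the image of $F\times K$ under the continuous multiplication, and it contains $F$ because $e\in K$. Thus it suffices to prove that $FK$ is of countable character in $G$, and the natural candidate for an outer base is $\{FU_n : n\in\mathbb{N}\}$; each $FU_n=\bigcup_{a\in F}aU_n$ is open since every left translation is open, and each contains $FK$.

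The heart of the argument is to show that $\{FU_n\}$ really is an outer base at $FK$, and this is the step I expect to be the main obstacle, since it must be carried out using only joint continuity of multiplication (the inverse need not be continuous in a paratopological group). Let $O$ be open with $FK\subseteq O$. For a fixed $a\in F$ one has $aK\subseteq O$; applying continuity of multiplication at each point $(a,k)$ with $k\in K$ and covering the compact set $K$ by finitely many of the resulting second-coordinate neighborhoods, I obtain an open $V_a\ni a$ and an open $W\supseteq K$ with $V_aW\subseteq O$. Since $\{U_n\}$ is an outer base at $K$, some $U_{m(a)}\subseteq W$, so $V_aU_{m(a)}\subseteq O$. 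The family $\{V_a : a\in F\}$ covers the compact set $F$, so finitely many $V_{a_1},\dots,V_{a_j}$ suffice; taking $N=\max_i m(a_i)$ and using that the $U_n$ decrease, every $a\in F$ lies in some $V_{a_i}$ and satisfies $aU_N\subseteq V_{a_i}U_{m(a_i)}\subseteq O$. Therefore $FU_N\subseteq O$, which proves the claim.

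Consequently $FK$ is a compact set of countable character in $G$ containing $F$, and since $F$ was an arbitrary compact subset, $G$ is of countable type. I would emphasize that the only group-theoretic inputs used are that left translations are homeomorphisms (hence open maps) and that multiplication is jointly continuous, so the argument stays entirely within the paratopological setting.
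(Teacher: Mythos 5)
Your proof is correct and follows essentially the same route as the paper: both arguments replace the given compact set $F$ by the compact set $FK$ (with $e\in K$) and show that the translates $\{FU_n\}$ of the countable outer base at $K$ form an outer base at $FK$. The only difference is cosmetic: the paper verifies this last step by citing a Wallace-type separation result (\cite[Proposition 1.4.29]{AT2008}) to get a single neighborhood $V$ of $e$ with $FV\subseteq O$ and then squeezing some $U_n$ between $K$ and $KV$, whereas you prove the needed separation inline by a direct compactness argument over $F$ and $K$.
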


\begin{proof}
Obviously, it suffices to show that each almost countable type paratopological group is of countable type. Since $G$ is of almost countable type, there exist a compact subset $K$ and a sequence of open subsets $\{U_{n}: n\in\mathbb{N}\}$ such that $\{U_{n}: n\in\mathbb{N}\}$ is a base for $G$ at $K$. Without loss of generality, we may assume that $e\in K.$ Let $L$ be an arbitrary compact subset of $G$. Put $F=L K$.  Clearly, $F$ is compact since $G$ is a paratopological group. Then the family $\{L U_{n}: n\in\mathbb{N}\}$ is a base for $G$ at $F$. Indeed, take an arbitrary open neighborhood $O$ of $F$ in $G$. By \cite[Proposition 1.4.29]{AT2008}, there exists an open neighborhood $V$ of the neutral element $e$ in $G$ such that $LKV =FV\subset O$. Then $KV$ is an open neighborhood of $K$, and therefore there exists $n\in \mathbb{N}$ such that $K\subset U_{n}\subset KV$, then $LK\subset LU_{n}\subset LKV\subset O$. Therefore, $G$ is of countable type.
\end{proof}

\begin{proposition}\label{p0}
Suppose that $H$ is a closed subgroup of an almost countable type paratopological group $G$. Then the quotient space $G/H$ is of almost countable type.
\end{proposition}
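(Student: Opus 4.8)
The plan is to transfer the compact set of countable character from $G$ to $G/H$ through the canonical quotient map $\pi\colon G\to G/H$, $\pi(g)=gH$. First I would record the two properties of $\pi$ that drive the argument: it is continuous (by definition of the quotient topology) and it is open. Openness is the standard fact for paratopological groups: for any open $U\subseteq G$ we have $\pi^{-1}(\pi(U))=UH=\bigcup_{h\in H}Uh$, and since every right translation $x\mapsto xh$ is a homeomorphism of $G$, each $Uh$ is open, so $UH$ is open and hence $\pi(U)$ is open in $G/H$.

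Next, since $G$ is of almost countable type, I would fix a non-empty compact set $K\subseteq G$ together with a sequence $\{U_{n}:n\in\mathbb{N}\}$ of open sets that is a base for $G$ at $K$. Put $L=\pi(K)$. As the continuous image of a compact set, $L$ is compact and non-empty. The whole content of the proposition is then the claim that $\{\pi(U_{n}):n\in\mathbb{N}\}$ is a base for $G/H$ at $L$.

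To verify this claim I would take an arbitrary open neighborhood $W$ of $L$ in $G/H$. Then $\pi^{-1}(W)$ is open in $G$, and since $\pi(K)=L\subseteq W$ we have $K\subseteq\pi^{-1}(W)$. Because $\{U_{n}\}$ is a base at $K$, there is an $n$ with $K\subseteq U_{n}\subseteq\pi^{-1}(W)$. Applying $\pi$ and using openness, $\pi(U_{n})$ is an open set with $L=\pi(K)\subseteq\pi(U_{n})\subseteq\pi(\pi^{-1}(W))\subseteq W$. Thus each $\pi(U_{n})$ is an open neighborhood of $L$ and every neighborhood of $L$ contains one of them, so $L$ is a non-empty compact set of countable character in $G/H$, whence $G/H$ is of almost countable type.

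I expect no serious obstacle here; the argument is essentially a one-line transfer once openness of $\pi$ is in hand. The only point requiring genuine care is the passage from a neighborhood $W$ of $L$ to an open set containing $K$: pulling back gives $\pi^{-1}(W)$, which contains the full saturation $KH$, but the relevant feature is simply that it contains $K$ itself, which is exactly what allows the base $\{U_{n}\}$ at $K$ to produce the required $U_{n}\subseteq\pi^{-1}(W)$. (Note that by Proposition~\ref{p1} this in fact shows $G/H$ is of countable type as well.)
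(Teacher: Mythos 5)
Your argument is correct and is essentially the same as the paper's: both push the compact set $K$ and its countable neighborhood base $\{U_n\}$ forward through the open quotient map $\pi$, the paper merely leaving the verification that $\{\pi(U_n)\}$ is a base at $\pi(K)$ to the reader. Your write-up just supplies the routine details (openness of $\pi$, the pullback of a neighborhood of $\pi(K)$) that the paper omits.
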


\begin{proof}
Let $\pi: G\rightarrow G/H$ be the quotient mapping. Obviously, $\pi$ is open. Since $G$ is of almost countable type, it contains a non-empty compact set $K$ which has a neighborhood base $\{U_{n}: n\in \mathbb{N}\}$ in $G$. Then $\{\pi(U_{n}): n\in \mathbb{N}\}$ is a neighborhood base for $G/H$ at the compact set $\pi(K)$, so $G/H$ is of almost countable type.
\end{proof}

\begin{proposition}\label{p3}
Let $G$ be a Hausdorff paratopological group that is of almost countable type, and let $O$ be a neighborhood of the neutral element in $G$. Then there exists a compact subgroup $H\subset O$ such that $H$ is of countable character in $G$.
\end{proposition}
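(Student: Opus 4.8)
The plan is to begin by invoking Proposition~\ref{p1} to replace ``almost countable type'' by ``countable type'', fix a non-empty compact set $K$ of countable character, and, after a left translation (which is a homeomorphism), assume $e\in K$; let $\{U_n:n\in\mathbb{N}\}$ be a decreasing base of open neighbourhoods of $K$. The one structural fact I would record at the outset is that, since $G$ is Hausdorff and $K$ is compact, a compact set and a point outside it can be separated by disjoint open sets; combined with $\{U_n\}$ being a base at $K$ this gives $\bigcap_n\overline{U_n}=K$. This is the only place from which compactness of the eventual subgroup will come, so it is important that it needs only Hausdorffness and not regularity.

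Next I would build, by recursion on $n$ using joint continuity of multiplication and compactness of $K$, open neighbourhoods $V_n$ of $e$ with $KV_n\subseteq U_n$ (whence $V_n\subseteq U_n$, as $e\in K$) and $V_{n+1}V_{n+1}\subseteq V_n$; at the first step I would also use Hausdorffness to separate $e$ from the compact set $K\setminus O$ and thereby arrange $\overline{V_0}\cap(K\setminus O)=\emptyset$. Set $H=\bigcap_n\overline{V_n}$. Then $H$ is closed and contains $e$; since $V_n\subseteq U_n$ we get $H\subseteq\bigcap_n\overline{U_n}=K$, so $H$ is compact; the condition on $V_0$ yields $H\subseteq K\cap\overline{V_0}\subseteq O$; and $\overline{V_{n+1}}\,\overline{V_{n+1}}\subseteq\overline{V_{n+1}V_{n+1}}\subseteq\overline{V_n}$ for every $n$ shows $HH\subseteq H$. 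Thus $H$ is a non-empty compact subsemigroup contained in $O$.

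The step that replaces the classical symmetric-neighbourhood construction is to upgrade this compact subsemigroup to a subgroup, which I would do via the idempotent (Ellis--Numakura) lemma: for $a\in H$ the set $\Gamma=\overline{\{a^n:n\ge 1\}}\subseteq H$ is a compact subsemigroup, hence contains an idempotent, which in a group must equal $e$; a short net argument (taking $a^{m}\to e$ with $m\ge 2$ and applying continuity of left translation by $a^{-1}$) then gives $a^{-1}\in H$. So $H$ is a subgroup. Being a compact Hausdorff paratopological group, $H$ is in fact a topological group by Ellis' theorem, so inversion is continuous on $H$ --- a fact I will need at once.

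The hard part is the countable character of $H$, and here I would prove that $\{HV_n:n\in\mathbb{N}\}$ is a neighbourhood base at $H$. Suppose not: some open $W\supseteq H$ contains no $HV_n$, giving points $x_n\in HV_n\setminus W$. Since $HV_n\subseteq KV_n\subseteq U_n$, the family $(x_n)$ is eventually inside every open neighbourhood of $K$, so by a finite-subcover argument it has a cluster point $x^{*}\in K$. Writing $x_n=h_nv_n$ with $h_n\in H$ and $v_n\in V_n$, I would pass to a subnet with $h_n\to h^{*}\in H$; continuity of inversion on $H$ then gives $v_n=h_n^{-1}x_n\to (h^{*})^{-1}x^{*}=:v^{*}$, and since $v_n\in V_n\subseteq V_m$ eventually, $v^{*}\in\bigcap_m\overline{V_m}=H$. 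Hence $x^{*}=h^{*}v^{*}\in H\subseteq W$, contradicting $x_n\notin W$ with $W$ open. Therefore $H$ has countable character, finishing the proof. I expect the genuine obstacles to be exactly these last two points: reconstructing the subgroup in the absence of inverse-continuous (symmetric) neighbourhoods, which the idempotent lemma together with Ellis' theorem resolves, and the character argument, whose delicacy lies in forcing the auxiliary net into the compact set $K$ via the relations $KV_n\subseteq U_n$.
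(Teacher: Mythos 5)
Your argument is correct, and while the skeleton (translate $K$ to contain $e$, build a decreasing sequence $V_n$ of neighbourhoods of $e$ controlled by the base $\{U_n\}$ at $K$, intersect to get a compact subsemigroup inside $O$, upgrade to a subgroup, then establish countable character) matches the paper's, you diverge from it at three points, each legitimately. First, you set $H=\bigcap_n\overline{V_n}$ and deduce compactness from $\bigcap_n\overline{U_n}=K$ (which indeed needs only Hausdorffness), whereas the paper sets $H=\bigcap_n V_n$ and forces closedness through an extra inductive condition $\overline{V_{n+1}\cap K}\subset K\cap V_n$ using regularity of the compact Hausdorff space $K$; these are interchangeable, though note your route makes the paper's pseudocharacter observation unavailable (see below). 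Second, for the subgroup step the paper runs the minimal closed right ideal argument (Zorn's lemma on closed right ideals, then $xL=L$ forces $x^{-1}\in L$), while you invoke the Ellis--Numakura idempotent lemma on $\overline{\{a^n:n\ge1\}}$ and recover $a^{-1}$ by a net argument; these are the same circle of ideas and both work, since each uses only one-sided continuity plus compactness and cancellation in $G$. The third divergence is the substantive one: the paper gets countable character of $H$ "softly," by noting $H=\bigcap_n V_n$ is a $G_\delta$ in the compact Hausdorff space $K$, hence of countable character in $K$, and then applying $\chi(H,G)\le\chi(H,K)\cdot\chi(K,G)$ from \cite[3.1.E]{E1989}; you instead exhibit the explicit base $\{HV_n\}$ via a cluster-point argument, which forces you to import Ellis' theorem (a compact Hausdorff paratopological group is a topological group) to get continuity of inversion on $H$. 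Your argument is sound --- the only points to make explicit are that the subnet is chosen so that $x_{n_\alpha}\to x^*$ \emph{and} $h_{n_\alpha}\to h^*$ simultaneously, and that $x^*=h^*v^*$ by uniqueness of limits in the Hausdorff group --- and Ellis' theorem could even be avoided by instead extracting a convergent subnet of $h_{n_\alpha}^{-1}$ in the compact set $H^{-1}=H$ and identifying its limit as $(h^*)^{-1}$ by continuity of multiplication. What your approach buys is a concrete countable base $\{HV_n\}$ at $H$; what the paper's buys is independence from Ellis' theorem, at the cost of quoting the character inequality from Engelking.
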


\begin{proof}
Since paratopological group $G$ is of almost countable type, there exists a non-empty compact set $K$ that is of countable character in $G$. By the homogeneity of $G$, we may assume that $K$ contains the neutral element $e$ of $G$. Let $\{U_{n}: n\in\mathbb{N}\}$ be a neighborhood base for $G$ at $K$. By induction, we can define a sequence $\{V_{n}: n\in \mathbb{N}\}$ of open neighborhoods of $e$ in $G$ satisfying the following conditions:

(i) $V_{1}\subset O$;

(ii)$V_{n+1}^{2}\subset U_{n}\cap V_{n}$ for each $n\in\mathbb{N}$;

(iii) $\overline{V_{n+1}\cap K}\subset K\cap V_{n}$ for each $n\in\mathbb{N}$ (This is possible since $K$ is Hausdorff and compact.).

Put $H=\bigcap_{n\in \mathbb{N}}V_{n}$. It is obvious that $H\subset O.$ Finally, we shall show that $H$ is a compact subgroup in $G$ such that it is of countable character in $G$.

Since $V_{n+1}^{2}\subset U_{n}\cap V_{n}$ for each $n\in\mathbb{N}$, $H$ is a semigroup and $H\subset K$. Note that $$H=H\cap K=(\bigcap_{n\in\mathbb{N}}V_{n})\cap K=\bigcap_{n\in\mathbb{N}}(V_{n}\cap K)=\bigcap_{n\in\mathbb{N}}(\overline{V_{n}\cap K}).$$Thus $H$ is closed, so $H$ is compact.

A non-empty subset $M$ of $H$ is called a right ideal in $H$ if $MH\subset M$. Since $H$ is a compact semigroup of $G$, it contains a minimal closed right ideal, say $L$ (apply the Kuratowski-Zorn lemma to the family of all closed right ideals in $K$ by ordered by inverse inclusion). For each $x\in L$, it is easy to see that $xL\subset LH\subset L$, and thus $xLH\subset x(LH)\subset xL$, i.e., $xL$ is a right ideal in $H$. Since $xL$ is closed in $H$, $xL\subset L$ and $L$ is a minimal closed right ideal in $H$, we have $xL=L$ for each $x\in L$. In particular, we have $x^{2}L=L$ for each $x\in L$, whence it follows that $x^{2}y=x$ for some $y\in L$ and thus $x^{-1}=y\in L$. Therefore, we conclude that $e\in L$, $H=L$, and that $H$ is a subgroup of $G$.

Then it is obvious that compact subgroup $H$ has countable pseudocharacter in $G$ and in $K$. Since $K$ is compact, we conclude that $\chi(H, K)\leq\omega$. Then it follows from \cite[3.1.E]{E1989} that $\chi(H, G)\leq\chi(H, K)\cdot \chi(K, G)$. Therefore, compact subgroup $H$ is of countable character in $G$.
\end{proof}

Similarly, we can show the following proposition.

\begin{proposition}\label{p5}
Let $G$ be a Hausdorff paratopological group that is of almost subcountable type, and let $O$ be a neighborhood of the neutral element in $G$. Then there exists a compact subgroup $H\subset O$ that is of countable pseudocharacter in $G$.
\end{proposition}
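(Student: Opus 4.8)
The plan is to run the argument of Proposition~\ref{p3} almost verbatim, the only genuine change being in the hypothesis fed to the construction and in the final, easier, verification. Since $G$ is of almost subcountable type, it contains a non-empty compact set $K$ that is a $G_{\delta}$-set in $G$, and by the homogeneity of $G$ I may translate so that $e\in K$. Fix a sequence $\{U_{n}:n\in\mathbb{N}\}$ of open subsets of $G$ with $K=\bigcap_{n\in\mathbb{N}}U_{n}$; note that each $U_{n}$ is an open neighborhood of $e$, so it can play the same role in the induction that the base members played in Proposition~\ref{p3}.

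Next I would build, by induction and exactly as before, a decreasing sequence $\{V_{n}:n\in\mathbb{N}\}$ of open neighborhoods of $e$ satisfying $V_{1}\subset O$, $V_{n+1}^{2}\subset U_{n}\cap V_{n}$, and $\overline{V_{n+1}\cap K}\subset K\cap V_{n}$ for every $n$. The squaring condition is available from continuity of multiplication at $(e,e)$, since $U_{n}\cap V_{n}$ is an open neighborhood of $e$; the closure condition is available because $K$, being compact in the Hausdorff group $G$, is closed and regular, so a suitably small $V_{n+1}$ has $\operatorname{cl}_{K}(V_{n+1}\cap K)=\overline{V_{n+1}\cap K}\subset V_{n}\cap K$. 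Put $H=\bigcap_{n\in\mathbb{N}}V_{n}$, so that immediately $H\subset O$.

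The structural core is then identical to Proposition~\ref{p3}. Condition $V_{n+1}^{2}\subset U_{n}$ makes $H$ a subsemigroup, and since $e\in V_{n+1}$ gives $V_{n+1}\subset V_{n+1}^{2}\subset U_{n}$, I get $H\subset\bigcap_{n}U_{n}=K$. The closure condition rewrites $H=\bigcap_{n}(V_{n}\cap K)=\bigcap_{n}\overline{V_{n}\cap K}$, so $H$ is closed and hence, lying in the compact $K$, compact. Finally the Kuratowski--Zorn argument on minimal closed right ideals of the compact semigroup $H$ produces $e\in L=H$ and shows that $H$ is in fact a subgroup of $G$.

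The one place the two proofs diverge is the final estimate, and here the subcountable setting is strictly simpler: rather than combining $\chi(H,K)\le\omega$ with $\chi(K,G)\le\omega$ as in Proposition~\ref{p3}, I only need to observe that $H=\bigcap_{n\in\mathbb{N}}V_{n}$ is a countable intersection of open subsets of $G$, hence a closed $G_{\delta}$-set, hence of countable pseudocharacter in $G$. Consequently the real work — and the step I expect to be the main obstacle — is not the conclusion but arranging the inductive construction so that all three conditions hold simultaneously, in particular the closure condition that forces $H$ to be closed; this is exactly the delicate point already carried out in Proposition~\ref{p3}, which is why the result can be asserted ``similarly.''
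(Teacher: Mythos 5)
Your proposal is correct and is exactly the argument the paper intends: the paper gives no separate proof, merely asserting the result "similarly" to Proposition~\ref{p3}, and your adaptation — replacing the neighborhood base at $K$ by a countable family of open sets with intersection $K$, running the same inductive construction and minimal-right-ideal argument, and concluding that $H=\bigcap_{n}V_{n}$ is a $G_{\delta}$-set — is precisely the intended modification.
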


\begin{proposition}
Let $G$ be a Hausdorff paratopological group that is of almost countable type, and let $\mathscr{H}$ be the family of all compact subgroups of $G$ which are of countable character in $G$. For each $H\in\mathscr{H}$, let $\pi_{H}: G\rightarrow G/H$ be the quotient mapping onto the left coset space $G/H$. Let $$\mathscr{B}=\{\pi_{H}^{-1}(V): H\in\mathscr{H}\ \mbox{and}\ V\ \mbox{is open in}\ G/H\}.$$ Then $\mathscr{B}$ is a base of $G$.
\end{proposition}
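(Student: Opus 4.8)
The plan is to verify that $\mathscr{B}$ is a base by checking the local condition at each point, first reducing to the neutral element $e$ by homogeneity. Note at the outset that every member of $\mathscr{B}$ is open in $G$: since each $\pi_{H}$ is continuous, $\pi_{H}^{-1}(V)$ is open whenever $V$ is open in $G/H$. Recall also that, because right translations are homeomorphisms of a paratopological group, for any open $U$ one has $\pi_{H}^{-1}(\pi_{H}(U))=UH=\bigcup_{h\in H}Uh$, which is open; hence each $\pi_{H}$ is an open mapping. By Proposition~\ref{p3} the family $\mathscr{H}$ is non-empty, and in fact for every neighborhood $O$ of $e$ there is a member of $\mathscr{H}$ contained in $O$.

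First I would show that $\mathscr{B}$ is a local base at $e$. Let $W$ be an arbitrary open neighborhood of $e$. Applying Proposition~\ref{p3} with $O=W$, choose a compact subgroup $H\in\mathscr{H}$ with $H\subset W$. The key step is then a standard compactness argument: for each $h\in H$ the continuity of multiplication at $(e,h)$ yields open sets $U_{h}\ni e$ and $V_{h}\ni h$ with $U_{h}V_{h}\subset W$; since $H$ is compact, finitely many $V_{h_{1}},\dots,V_{h_{k}}$ cover $H$, and $U=\bigcap_{i=1}^{k}U_{h_{i}}$ is an open neighborhood of $e$ satisfying $UH\subset W$. Because $\pi_{H}$ is open, $\pi_{H}(U)$ is open in $G/H$, and $\pi_{H}^{-1}(\pi_{H}(U))=UH$. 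Thus $B:=UH=\pi_{H}^{-1}(\pi_{H}(U))\in\mathscr{B}$ with $e\in B\subset W$, as required.

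Finally I would upgrade this to a base at every point using homogeneity. The left translation $L_{g}\colon xH\mapsto gxH$ on $G/H$ is a homeomorphism: since $\pi_{H}$ is an open quotient map and $\pi_{H}\circ(x\mapsto gx)=L_{g}\circ\pi_{H}$ is continuous, the universal property of quotient maps makes $L_{g}$ continuous, with continuous inverse $L_{g^{-1}}$. A direct computation then gives, for $B=\pi_{H}^{-1}(V)\in\mathscr{B}$, the identity $gB=\pi_{H}^{-1}(L_{g}V)$, so that $gB\in\mathscr{B}$; hence $\mathscr{B}$ is invariant under all left translations. Consequently, given any open $W\ni g$, translating $g^{-1}W$ back to $e$ produces $B\in\mathscr{B}$ with $e\in B\subset g^{-1}W$, and then $gB\in\mathscr{B}$ with $g\in gB\subset W$. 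Since left translations are homeomorphisms of $G$, this shows $\mathscr{B}$ is a local base at each point, i.e. $\mathscr{B}$ is a base of $G$.

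The main obstacle I anticipate is not any single deep step but the bookkeeping around the coset spaces: verifying carefully that $\pi_{H}$ is open and that left translations descend to homeomorphisms of $G/H$, so that the reduction to $e$ is legitimate. The purely local part at $e$ is routine once Proposition~\ref{p3} is in hand, the only genuine input being the compactness-plus-continuity argument producing the open set $U$ with $UH\subset W$.
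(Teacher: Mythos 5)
Your proof is correct and follows essentially the same route as the paper's: reduce to a local base at $e$ via translation-invariance of $\mathscr{B}$, invoke Proposition~\ref{p3} to obtain a compact subgroup $H$ inside a suitable neighborhood, and exhibit a member of $\mathscr{B}$ of the form $UH=\pi_{H}^{-1}(\pi_{H}(U))$ contained in $W$. The only difference is cosmetic: where you choose $H\subset W$ first and then run a compactness/Wallace-type argument to find $U$ with $UH\subset W$, the paper first picks $O$ with $O^{2}\subset W$, takes $H\subset O$, and concludes $OH\subset O^{2}\subset W$ in one line.
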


\begin{proof}
Obviously, the family $\mathscr{B}$ is closed under taking left and right translates in $G$. Hence it suffices to show that the subfamily $$\mathscr{B}(e)=\{U\in\mathscr{B}: e\in U\}$$ is a base for $G$ at the neutral element $e$. Let $W$ be a neighborhood of $e$ in $G$. Find a neighborhood $O$ of $e$ such that $O^{2}\subset W$. It follows from Proposition~\ref{p3} that there exists a compact subgroup $H$ of $G$ such that $H\in\mathscr{H}$ and $H\subset O$. Let $V=\pi_{H}(O)$. Since $\pi_{H}^{-1}(V)=\pi_{H}^{-1}\pi_{H}(O)=OH$, the set $V$ is open in $G/H$. Moreover, we have $OH\in\mathscr{B}(e)$ and $OH\subset O^{2}\subset W$.
\end{proof}

\begin{theorem}\label{t0}
Let $G$ be a paratopological group. If $H$ is a compact subgroup of $G$ and $P$ is a closed subset of $G$, then the sets $HP$ and $PH$ are closed in $G$.
\end{theorem}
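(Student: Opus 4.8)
The plan is to prove that the complement of $HP$ is open; the argument for $PH$ is entirely symmetric. The chief difficulty is that in a paratopological group inversion need not be continuous, so the usual topological-group proof—which slides a point back by $h^{-1}$ using continuity of the inverse map—is unavailable. The observation that lets me bypass inversion is purely algebraic: since $H$ is a subgroup, $H=H^{-1}$ as a \emph{set}, and consequently, for any subset $W$ of $G$, one has $W\cap HP=\emptyset$ if and only if $HW\cap P=\emptyset$ (indeed, $w=hp$ with $w\in W$, $h\in H$, $p\in P$ is equivalent to $h^{-1}w=p\in P$ with $h^{-1}\in H$). This reformulation involves only forward multiplication and the set equality $H=H^{-1}$, never topological inversion.

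First I would fix $x\in G\setminus HP$ and record the algebraic consequence that $hx\notin P$ for every $h\in H$; otherwise $hx=p\in P$ would give $x=h^{-1}p\in HP$, a contradiction. Since $P$ is closed, $G\setminus P$ is open and contains each point $hx$. Applying continuity of the multiplication map $m\colon G\times G\to G$ at each pair $(h,x)$, I obtain open sets $O_h\ni h$ and $W_h\ni x$ with $O_hW_h\subseteq G\setminus P$.

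The compactness of $H$ enters at this point: the family $\{O_h:h\in H\}$ covers $H$, so it admits a finite subcover $O_{h_1},\dots,O_{h_n}$, and I set $W=\bigcap_{i=1}^{n}W_{h_i}$, an open neighborhood of $x$. For any $h\in H$ we have $h\in O_{h_i}$ for some $i$, whence $hW\subseteq O_{h_i}W_{h_i}\subseteq G\setminus P$; thus $HW\subseteq G\setminus P$, that is, $HW\cap P=\emptyset$. By the equivalence noted in the first paragraph this yields $W\cap HP=\emptyset$, so $x$ is an interior point of $G\setminus HP$. As $x$ was arbitrary, $HP$ is closed.

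For $PH$ the same scheme applies verbatim using right multiplication: from $x\notin PH$ one gets $xh\notin P$ for all $h\in H$, and the reformulation $W\cap PH=\emptyset\iff WH\cap P=\emptyset$ (again from $H=H^{-1}$) reduces the problem to covering $H$ by finitely many neighborhoods on which $(w,h)\mapsto wh$ stays in $G\setminus P$. I expect the only point requiring care to be the bookkeeping that keeps every step inside continuity of multiplication; once the reformulation via $H=H^{-1}$ is in place, the tube-style finite subcover extracted from the compactness of $H$ finishes the proof routinely.
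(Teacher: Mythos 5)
Your proof is correct and follows essentially the same route as the paper's: both exploit the set equality $H=H^{-1}$ to reduce closedness of $HP$ at a point $x\notin HP$ to separating the compact set $Hx$ from the closed set $P$ using only continuity of multiplication. The only difference is that the paper delegates the compactness step to a cited separation result (\cite[Theorem 1.4.29]{AT2008}, yielding a neighborhood $U$ of $e$ with $H^{-1}xU\cap P=\emptyset$), whereas you re-derive that step inline via the tube-lemma argument.
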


\begin{proof}
We will prove that $HP$ is closed in $G$. The case of $PH$ differs only in trivial details. Take an arbitrary point $a\not\in HP$. Obviously, $H^{-1}a$  is compact since $H$ is a group, and the sets $H^{-1}a$ and $P$ are disjoint. Then it follows from \cite[Theorem 1.4.29]{AT2008} that there exists an open neighborhood $U$ of $e$ such that $H^{-1}aU$ and $P$ are disjoint. Then it is easy to see that $aU\cap HP=\emptyset$. Since $aU$ is an open neighborhood of $a$, the point $a$ is not in the closure of $HP$. Hence $HP$ is closed.
\end{proof}

By Proposition~\ref{p3} and Theorem~\ref{t0}, one can easily obtain the following two theorems.

\begin{theorem}\label{t1}
Let $G$ be a paratopological group. If $H$ is a compact subgroup in $G$, then the quotient mapping $\pi$ of $G$ onto the quotient space $G/H$ is perfect.
\end{theorem}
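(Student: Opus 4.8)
The plan is to verify directly the three defining conditions of a perfect mapping, as recalled just before Proposition~\ref{p6}: continuity, compactness of the point-preimages, and closedness. Continuity is immediate, since $\pi$ is the canonical quotient map and $G/H$ carries the quotient topology by definition. For the fibres, I would observe that for any $g\in G$ the preimage $\pi^{-1}(\pi(g))$ is exactly the left coset $gH$. Because multiplication in a paratopological group makes each left translation $x\mapsto gx$ a homeomorphism (its inverse being left translation by $g^{-1}$, again continuous), the coset $gH$ is a homeomorphic copy of the compact subgroup $H$ and is therefore compact. Thus every point-preimage of $\pi$ is compact.

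The only step with real content is the closedness of $\pi$, and here I would use the standard description of the quotient topology: a subset $A\subseteq G/H$ is closed if and only if its full preimage $\pi^{-1}(A)$ is closed in $G$. Hence, given an arbitrary closed set $P\subseteq G$, it suffices to show that $\pi^{-1}(\pi(P))$ is closed. A short computation identifies this set: a point $g$ lies in $\pi^{-1}(\pi(P))$ precisely when $gH=cH$ for some $c\in P$, that is, precisely when $g\in PH$; so $\pi^{-1}(\pi(P))=PH$. At this point Theorem~\ref{t0} applies verbatim: with $H$ a compact subgroup and $P$ closed, the set $PH$ is closed in $G$. Consequently $\pi(P)$ is closed in $G/H$, and $\pi$ is a closed mapping.

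Since continuity, compact fibres, and closedness all hold, $\pi$ is perfect. I expect no genuine obstacle in this argument: the single point deserving care is the set-theoretic identity $\pi^{-1}(\pi(P))=PH$, which is exactly what reduces the closedness of the quotient map to the already-established Theorem~\ref{t0}. Everything else is a formal consequence of the quotient construction together with the fact that translations remain homeomorphisms in a paratopological group.
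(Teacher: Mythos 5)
Your proposal is correct and is essentially the argument the paper intends: the paper states that Theorem~\ref{t1} follows easily from Theorem~\ref{t0}, and your reduction of closedness to the identity $\pi^{-1}(\pi(P))=PH$ together with compactness of the fibres $gH$ is precisely that deduction.
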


\begin{theorem}
Let $G$ be a Hausdorff paratopological group that is of almost countable type. Then there exists a compact subgroup $H$ that is of countable character in $G$ such that the quotient mapping $\pi$ of $G$ onto the quotient space $G/H$ is perfect.
\end{theorem}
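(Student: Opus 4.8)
The plan is to obtain this theorem as an immediate synthesis of the two results already established, namely Proposition~\ref{p3} and Theorem~\ref{t1}. The key observation is that the hypothesis ``$G$ is a Hausdorff paratopological group of almost countable type'' is exactly the hypothesis of Proposition~\ref{p3}, and the conclusion we want decomposes naturally into two halves: the existence of the compact subgroup $H$ with the right character, and the perfectness of the induced quotient map. Each half is handled by one of the cited results.

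First I would apply Proposition~\ref{p3} with the neighborhood $O$ taken to be all of $G$. Since $G$ is itself a neighborhood of the neutral element $e$, the proposition applies and produces a compact subgroup $H\subset G$ that is of countable character in $G$. This already delivers the subgroup $H$ demanded in the statement, together with its countable-character property. I would then feed this $H$ into Theorem~\ref{t1}: that result asserts that whenever $H$ is a compact subgroup of a paratopological group $G$, the quotient mapping $\pi\colon G\to G/H$ is perfect. Applying it to our $H$ yields precisely the remaining assertion, that $\pi$ is perfect. Chaining the two gives the full conclusion.

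I do not anticipate a genuine obstacle here, since the substantive content has been discharged upstream: the delicate part lies in the proof of Proposition~\ref{p3} (where the compact subgroup is extracted via a minimal closed right ideal in the compact semigroup $H=\bigcap_n V_n$, together with the character estimate $\chi(H,G)\leq\chi(H,K)\cdot\chi(K,G)$), and in the proof of Theorem~\ref{t1} (which rests on Theorem~\ref{t0}, the closedness of $HP$ and $PH$). The only point meriting a moment's care is to confirm that the single subgroup $H$ produced by Proposition~\ref{p3} simultaneously satisfies the hypothesis of Theorem~\ref{t1} without any further adjustment; this is immediate, as Theorem~\ref{t1} requires nothing of $H$ beyond being a compact subgroup. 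Consequently the theorem follows as a clean corollary, and the write-up should be correspondingly short.
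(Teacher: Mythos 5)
Your proposal is correct and matches the paper exactly: the paper derives this theorem (together with Theorem~\ref{t1}) directly from Proposition~\ref{p3} and Theorem~\ref{t0}, which is precisely your chain of Proposition~\ref{p3} (with $O=G$) followed by Theorem~\ref{t1}.
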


\begin{lemma}\label{l2}
Let $G$ be a paratopological group. If $H$ is a compact subgroup that is of countable character in $G$, then $G/H$ is first-countable.
\end{lemma}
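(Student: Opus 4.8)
The plan is to produce an explicit countable neighborhood base at the distinguished coset $\pi(e)$, where $\pi : G \to G/H$ is the canonical map onto the left coset space, and then to spread first-countability to all of $G/H$ by homogeneity. Since $H$ is of countable character in $G$, fix a countable neighborhood base $\{U_{n}: n\in\mathbb{N}\}$ for $H$ in $G$, so that each $U_{n}$ is open, $H\subset U_{n}$, and every open set containing $H$ contains some $U_{n}$. My claim is that $\{\pi(U_{n}): n\in\mathbb{N}\}$ is a neighborhood base for $G/H$ at $\pi(e)$.

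First I would record that $\pi$ is open, since for any open $U\subset G$ we have $\pi^{-1}(\pi(U))=UH=\bigcup_{h\in H}Uh$, a union of open sets; hence every $\pi(U_{n})$ is an open neighborhood of $\pi(e)$. The key step is a saturation argument. Given an arbitrary open neighborhood $O$ of $\pi(e)$ in $G/H$, the set $\pi^{-1}(O)$ is open in $G$ and is a union of left cosets; because $e\in\pi^{-1}(O)$, the whole coset $eH=H$ is contained in $\pi^{-1}(O)$. Thus $\pi^{-1}(O)$ is an open neighborhood of $H$ in $G$, so $U_{n}\subset\pi^{-1}(O)$ for some $n$, and therefore $\pi(U_{n})\subset\pi(\pi^{-1}(O))=O$ by surjectivity of $\pi$. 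This establishes the claim.

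To finish, I would invoke the homogeneity of $G/H$. For $a\in G$ the map $\lambda_{a}(gH)=agH$ is well defined and satisfies $\lambda_{a}\circ\pi=\pi\circ L_{a}$, where $L_{a}$ denotes left translation by $a$ on $G$; since $L_{a}$ is a homeomorphism of $G$ and $\pi$ is an open continuous surjection, $\lambda_{a}$ is a homeomorphism of $G/H$ carrying $\pi(e)$ to $aH$. Pushing $\{\pi(U_{n})\}$ forward by $\lambda_{a}$ then yields a countable base at $aH$, and first-countability of $G/H$ follows.

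I do not anticipate a genuine difficulty: the proof is simply a transfer of the countable character of $H$ across the open quotient map, with the saturation remark carrying the weight. The one point demanding care is that the preimage of a neighborhood of $\pi(e)$ is a neighborhood of the \emph{whole} subgroup $H$, not merely of the point $e$; this is exactly where the $H$-invariance of saturated sets is used. It is worth noting that compactness of $H$ is not actually needed for this particular statement, although it is available in the intended applications.
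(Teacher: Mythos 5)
Your proof is correct and follows essentially the same route as the paper's: take a countable neighborhood base $\{U_n\}$ for $H$ in $G$, push it forward by the open quotient map $\pi$ to get a base at $\pi(e)$ using the fact that $\pi^{-1}(O)$ is a saturated open set containing all of $H$, and conclude by homogeneity. Your additional remarks (the explicit verification that $\pi$ is open, and the observation that compactness of $H$ is not needed) are accurate but do not change the argument.
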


\begin{proof}
By the homogeneity of $G$, it suffices to show that the neutral element of $G/H$ is of countable character in $G/H$. Let $\{U_{n}: n\in\mathbb{N}\}$ be a neighborhood base for $G$ at $H$. Since the quotient mapping $\pi: G\rightarrow G/H$ is open, each $\pi(U_{n})$ is an open neighborhood of  the neutral element of $G/H$. Let $W$ be an open neighborhood of  the neutral element of $G/H$. Then $\pi^{-1}(W)$ is an open neighborhood at $H$ in $G$, hence there exists  $n\in\mathbb{N}$ such that $U_{n}\subset \pi^{-1}(W)$, so $\pi(U_{n})\subset W$. Therefore, the family $\{\pi(U_{n}): n\in\mathbb{N}\}$ is a neighborhood base for $G/H$ at the neutral element. Hence $G/H$ is first-countable.
\end{proof}

\begin{theorem}
A Hausdorff paratopological group $G$ is of almost countable type if and only if it contains a compact subgroup $H$ of $G$ such that the left quotient space $G/H$ is first-countable.
\end{theorem}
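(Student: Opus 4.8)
The plan is to treat the two implications separately, with the forward direction being essentially a bookkeeping combination of earlier results and the reverse direction carrying the genuine content.

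For the forward implication, suppose $G$ is of almost countable type. I would simply apply Proposition~\ref{p3} (taking, say, $O=G$) to obtain a compact subgroup $H$ of $G$ that is of countable character in $G$. Then Lemma~\ref{l2} immediately yields that the left quotient space $G/H$ is first-countable, and nothing further is needed.

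For the reverse implication, suppose $G$ contains a compact subgroup $H$ with $G/H$ first-countable. The natural candidate compact set witnessing almost countable type is $H$ itself, so the goal is to show that $H$ is of countable character in $G$. Let $\pi\colon G\to G/H$ be the quotient map; by Theorem~\ref{t1}, $\pi$ is perfect, and in particular both open and closed. Since $G/H$ is first-countable, fix a countable neighborhood base $\{W_{n}\colon n\in\mathbb{N}\}$ at the coset $eH=\pi(H)$. Because $\pi$ is open and continuous and $\pi^{-1}(eH)=H$, each $\pi^{-1}(W_{n})$ is an open neighborhood of $H$ in $G$, and I would claim that $\{\pi^{-1}(W_{n})\colon n\in\mathbb{N}\}$ is a neighborhood base for $G$ at $H$. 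To verify this, take any open $U\supset H$ in $G$. The key step exploits closedness of $\pi$: since $G\setminus U$ is closed, $\pi(G\setminus U)$ is closed in $G/H$, and because $H=\pi^{-1}(eH)\subset U$ we have $eH\notin\pi(G\setminus U)$. Hence $V=(G/H)\setminus\pi(G\setminus U)$ is an open neighborhood of $eH$ with $\pi^{-1}(V)\subset U$; choosing $n$ with $W_{n}\subset V$ gives $\pi^{-1}(W_{n})\subset U$. This proves the claim, so $H$ is a compact set of countable character in $G$ and $G$ is of almost countable type.

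The main obstacle — really the only non-routine point — is the reverse direction's use of the closed-map ``shrinking'' property to push a neighborhood of $H$ in $G$ down to a neighborhood of $eH$ in $G/H$. This is exactly where the perfectness supplied by Theorem~\ref{t1} does the work, converting first-countability of $G/H$ back into countable character of the single compact subgroup $H$. Everything else reduces to the openness and continuity of $\pi$ together with the cited results.
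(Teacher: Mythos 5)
Your proof is correct and follows essentially the same route as the paper: Proposition~\ref{p3} plus Lemma~\ref{l2} for the forward direction, and Theorem~\ref{t1} together with pulling back a countable base at $\pi(e)$ for the converse. The only difference is that you spell out the closed-map verification that $\{\pi^{-1}(W_n)\}$ is a base at $H$, which the paper dismisses as ``easy to verify.''
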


\begin{proof}
Sufficiency. Suppose that $G$ contains a compact subgroup $H$ of $G$ such that the left quotient space $G/H$ is first-countable. By Theorem~\ref{t1}, it follows that the mapping $\pi: G\rightarrow G/H$ is perfect. Next we show that $H$ is of countable character in $G$. Indeed, let $\{U_{n}: n\in\mathbb{N}\}$ be a neighborhood base for $G/H$ at $\pi(e)$, where $e$ is the neutral element of $G$. For each $n\in\mathbb{N}$, let $V_{n}=\pi^{-1}(U_{n})$. Since the mapping $\pi: G\rightarrow G/H$ is perfect, it is easy to verify that the family $\{V_{n}: n\in\mathbb{N}\}$ is a base for $G$ at $H$. Therefore, $G$ is of almost countable type.

Necessity. Let $G$ be of almost countable type. Then it contains a non-empty compact subgroup $K$ that is of countable character in $G$ by Proposition~\ref{p3}. By Lemma~\ref{l2}, the left quotient space $G/H$ is first-countable.
\end{proof}

\begin{example}
There exists a separable almost countable type paratopological group that
 is not paracompact.
\end{example}

\begin{proof}
Example~4.1 in \cite{Lin2012} is a separable first-countable paratopological group, hence it is of almost countable type. However, it is not paracompact.
\end{proof}

\begin{example}
There exists a Lindel\"{o}f paratopological group
 that is not of almost countable type.
\end{example}

\begin{proof}
Let $X$ be a countable non-discrete $T_{1}$ space. Then the free paratopological group $FP(X)$ is a countable $T_{1}$ space \cite{EN2012}. Obviously, $FP(X)$ is Lindel\"{o}f and of almost subcountable type. However, it is not of almost countable type.
\end{proof}

\section{Almost countable type free paratopological groups}
Now we can prove our main Theorems.

Since $X$ generates the free group $FP_{a}(X)$ \cite{AT2008}, each element $g\in FP_{a}(X)$ has the form $g=x_{1}^{\varepsilon_{1}}\cdots x_{n}^{\varepsilon_{n}}$, where $x_{1}, \cdots, x_{n}\in X$ and $\varepsilon_{1}, \cdots, \varepsilon_{n}=\pm 1$. This word for $g$ is called {\it reduced} if it contains no pair of consecutive symbols of the form $xx^{-1}$ or $x^{-1}x$. It follows that if the word $g$ is reduced and non-empty then it is different from the neutral element of $FP_{a}(X)$. In particular, each element $g\in FP_{a}(X)$ distinct from the neutral element can be uniquely written in the form $g=x_{1}^{\varepsilon_{1}}x_{2}^{\varepsilon_{2}}\cdots x_{n}^{\varepsilon_{n}}$, where $n\geq 1$, $\varepsilon_{i}\in \mathbb{Z}\setminus\{0\}$, $x_{i}\in X$, and $x_{i}\neq x_{i+1}$ for each $i=1, \cdots, n-1$. Similar assertions are valid for $AP_{a}(X)$. For every non-negative integer $n$, denote by $FP_{n}(X)$ and $AP_{n}(X)$ the subspace of paratopological group $FP(X)$ and $AP(X)$ that consists of all words of reduced length $\leq n$ with respect to the free basis $X$, respectively. Define $i_n: \tilde{X}^n\to FP_n(X)$ by $i_n((x_1, x_2, ... x_n))=x_1x_2...x_n$ for each $n$, where $\tilde{X}=X\cup\{e\}\cup X^{-1}$. Then each $i_n$ is a continuous map.

\begin{lemma}\cite{LFC2012}\label{l0}
Let $X$ be a Functionally Hausdorff space, and let $A$ be an arbitrary subset of $AP(X)$. If $A\cap AP_{n}(X)$ is finite for each $n\in\mathbb{N}$, then $A$ is closed and discrete in $AP(X)$.
\end{lemma}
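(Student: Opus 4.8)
The plan is to reduce the statement to the corresponding, classical fact about \emph{free abelian topological groups} over Tychonoff spaces, by pushing $AP(X)$ forward along a length-preserving continuous injective homomorphism. First I would use the hypothesis that $X$ is functionally Hausdorff to obtain a continuous injection of $X$ into a Tychonoff space: the family $C(X,\mathbb{R})$ separates the points of $X$, so the diagonal map $j\colon X\to\mathbb{R}^{\kappa}$ (with $\kappa=|C(X,\mathbb{R})|$) is continuous and injective; put $Y=j(X)\subseteq\mathbb{R}^{\kappa}$, which is Tychonoff. Composing $j$ with the canonical embedding $Y\hookrightarrow A(Y)$ of $Y$ into the free abelian \emph{topological} group $A(Y)$ gives a continuous map $X\to A(Y)$. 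Since $A(Y)$ is a topological group, and hence a paratopological group, the universal property in the definition of $AP(X)$ extends this map to a continuous homomorphism $\hat{j}\colon AP(X)\to A(Y)$; because $j$ is injective on the free generating set, $\hat{j}$ is an injective homomorphism of the underlying abstract free abelian groups.

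The next step is to verify that $\hat{j}$ preserves reduced length, so that it interacts correctly with the filtrations $\{AP_{n}(X)\}$ and $\{A_{n}(Y)\}$. A reduced word $\varepsilon_{1}x_{1}+\cdots+\varepsilon_{k}x_{k}$ of $AP(X)$ (distinct $x_{i}$, nonzero $\varepsilon_{i}$) is sent to $\varepsilon_{1}j(x_{1})+\cdots+\varepsilon_{k}j(x_{k})$, and the points $j(x_{i})$ are again pairwise distinct by injectivity; hence no cancellation occurs and $\hat{j}$ carries a word of reduced length $k$ to a word of reduced length $k$. Consequently $\hat{j}^{-1}(A_{n}(Y))=AP_{n}(X)$ for every $n$, and therefore $\hat{j}(A)\cap A_{n}(Y)=\hat{j}(A\cap AP_{n}(X))$ is finite for each $n\in\mathbb{N}$.

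At this point I would invoke the classical analogue of the lemma for free abelian topological groups over Tychonoff spaces (see \cite{AT2008}): if a subset $B$ of $A(Y)$ meets every $A_{n}(Y)$ in a finite set, then $B$ is closed and discrete in $A(Y)$. Applying this to $B=\hat{j}(A)$ shows $\hat{j}(A)$ is closed and discrete. Finally I would pull this back along $\hat{j}$: since $\hat{j}$ is continuous and injective, $A=\hat{j}^{-1}(\hat{j}(A))$ is closed; and for each $a\in A$ an open set $V\subseteq A(Y)$ with $V\cap\hat{j}(A)=\{\hat{j}(a)\}$ yields an open set $\hat{j}^{-1}(V)$ with $\hat{j}^{-1}(V)\cap A=\{a\}$, so $A$ is discrete as well.

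The main obstacle is the topological-group input used in the last paragraph: establishing that a subset of $A(Y)$ meeting each $A_{n}(Y)$ finitely is closed and discrete when $Y=\mathbb{R}^{\kappa}$ is a large, non-metrizable and non-$k_{\omega}$ Tychonoff space. If one does not wish to cite it, this is where the real work lies, since the topology of $A(Y)$ is not the inductive limit of the $A_{n}(Y)$ in this generality: one must separate a fixed word $g$ from arbitrarily long words of $B$ \emph{simultaneously}, which calls for a Graev-type inductive construction of neighborhoods (or a suitable family of continuous invariant pseudometrics) that controls unbounded reduced length. Everything else — the embedding, the extension via the universal property, and the length bookkeeping — is routine.
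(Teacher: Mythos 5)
The paper does not prove this lemma; it is imported verbatim from \cite{LFC2012}, so there is no in-paper argument to compare against. Judged on its own, your proof is correct. The underlying group of $AP(X)$ is the abstract free abelian group on $X$, so the homomorphism $\hat{j}$ induced by the continuous bijection $j\colon X\to Y=j(X)$ onto the Tychonoff reflection is injective and preserves reduced length exactly, giving $\hat{j}^{-1}(A_{n}(Y))=AP_{n}(X)$; the pull-back of ``closed and discrete'' along a continuous injection is routine; and the classical input for $A(Y)$ over a Tychonoff space is a genuine, citable theorem. Note that your reduction is essentially the one the paper's own toolkit already supplies: by Lemma~\ref{l1} the identity map $AP(X)\to AP(X)^{\flat}\cong A(X)$ is a continuous injective (length-preserving) homomorphism onto a free abelian topological group, so you could have skipped the explicit diagonal embedding into $\mathbb{R}^{\kappa}$.

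One remark on the step you flag as ``where the real work lies'': it is lighter than you fear, and in fact yields to a second application of your own push-forward trick. Compose with the continuous monomorphism $A(Y)\to A(\beta Y)$ induced by $Y\hookrightarrow\beta Y$; since $\beta Y$ is compact, $A(\beta Y)$ carries the inductive limit topology of the compact sets $A_{n}(\beta Y)$, so a set meeting each $A_{n}(\beta Y)$ in a finite set is closed and discrete there, and this pulls back to $A(Y)$ exactly as in your last paragraph. No Graev-type pseudometric construction controlling unbounded length is needed.
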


By the group reflexion $G^{\flat}=(G, \tau^{\flat})$ of a paratopological group $(G, \tau)$ we understand the group $G$ endowed with the strongest topology $\tau^{\flat}\subset \tau$ such that $(G, \tau^{\flat})$ is a topological group.

\begin{lemma}\label{l1}\cite{PN2006}
If $X$ is a Functionally Hausdorff space then the topological groups $AP(X)^{\flat}$ and $A(X)$ are topologically isomorphic.
\end{lemma}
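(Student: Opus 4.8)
The plan is to avoid any explicit description of the reflection topology and argue entirely through universal properties. The first observation is that the underlying abstract groups of $AP(X)^{\flat}$ and $A(X)$ coincide: both are the free abelian group $AP_{a}(X)$ on the set $X$. For a functionally Hausdorff $X$ the canonical maps $X\to AP(X)$ and $X\to A(X)$ are injective and carry $X$ onto a free basis, and passing to the reflection leaves the underlying group unchanged, so $AP(X)^{\flat}$ and $A(X)$ share the same free basis $X$. It therefore suffices to produce continuous homomorphisms in both directions that restrict to the identity on $X$; each is then automatically the identity on the common underlying group, so the two maps are mutually inverse and together give a topological isomorphism.

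The key tool I would record first is the universal property of the reflection: if $h\colon (G,\tau)\to H$ is a continuous homomorphism from a paratopological group into a \emph{topological} group $H$, then $h\colon (G,\tau^{\flat})\to H$ is still continuous. Indeed, the topology $\sigma$ on $G$ that is initial with respect to $h$ is a topological group topology, since multiplication and inversion pull back through the homomorphism $h$; and $\sigma\subset\tau$ because $h$ is $\tau$-continuous, whence $\sigma\subset\tau^{\flat}$ by maximality of $\tau^{\flat}$. Thus $h$ is $\tau^{\flat}$-continuous.

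With this in hand I would build the two maps. Since $A(X)$ is in particular an abelian paratopological group, the continuous inclusion $X\to A(X)$ extends, by the defining universal property of the free abelian paratopological group $AP(X)$, to a continuous homomorphism $AP(X)\to A(X)$; as $A(X)$ is a topological group, the reflection property just recorded shows that the same homomorphism $\psi\colon AP(X)^{\flat}\to A(X)$ is continuous, and it is the identity on $X$. Conversely, the composite $X\to AP(X)\to AP(X)^{\flat}$ is continuous and lands in the topological group $AP(X)^{\flat}$, so the universal property of the free abelian topological group $A(X)$ yields a continuous homomorphism $\phi\colon A(X)\to AP(X)^{\flat}$, again the identity on $X$. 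As $\phi$ and $\psi$ are both the identity on the shared underlying group, the composites $\psi\circ\phi$ and $\phi\circ\psi$ are identity maps; being continuous bijections with continuous inverses, $\phi$ and $\psi$ are the desired mutually inverse topological isomorphisms.

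The main obstacle is not this formal argument but the two supporting facts on which it rests. First, one must justify that for functionally Hausdorff $X$ the underlying groups of $AP(X)$ and $A(X)$ really are the free abelian group on $X$ with $X$ as a genuine basis; this is the only place the separation hypothesis enters, and it is exactly what would fail for arbitrary $T_0$ spaces. Second, one needs the canonical map $X\to AP(X)$ to be continuous so that $X\to AP(X)^{\flat}$ is continuous, which is required before the universal property of $A(X)$ can be invoked. Both are standard, but they are the load-bearing points of the proof.
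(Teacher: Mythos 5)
The paper does not prove this lemma at all --- it is imported verbatim from Pyrch--Ravsky \cite{PN2006} as a black box --- so there is no in-paper argument to compare yours against. Your universal-property argument is correct and self-contained: the factorization principle for the reflection (a continuous homomorphism into a topological group remains continuous for $\tau^{\flat}$, because the initial topology it induces is a group topology contained in $\tau$ and hence in $\tau^{\flat}$) is proved correctly, and the two extensions $\psi\colon AP(X)^{\flat}\to A(X)$ and $\phi\colon A(X)\to AP(X)^{\flat}$ are obtained legitimately from the universal properties of $AP(X)$ and $A(X)$ respectively. Two small remarks. First, you lean on algebraic freeness of the underlying groups more than you need to: to conclude $\psi\circ\phi=\mathrm{id}$ and $\phi\circ\psi=\mathrm{id}$ it suffices that the canonical image of $X$ generates each group (which holds by definition) and that both composites agree with the identity on that generating set; freeness and injectivity of $X\to AP(X)$ are not actually load-bearing for the formal argument, though the functionally Hausdorff hypothesis is what makes the statement match the concrete description of $A(X)$ one usually has in mind. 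Second, the one point you use silently is that $\tau^{\flat}$ exists and is maximal among \emph{all} group topologies contained in $\tau$, i.e.\ that the supremum of the group topologies contained in $\tau$ is again a group topology contained in $\tau$; this is standard but is exactly what licenses the step ``$\sigma\subset\tau$ implies $\sigma\subset\tau^{\flat}$,'' and a careful write-up should record it.
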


\begin{theorem}\label{t2}
The following conditions are equivalent for a functionally Hausdorff space $X$:
\begin{enumerate}
\item The $FP(X)$ is of almost countable type;

\item The $AP(X)$ is of almost countable type;

\item The space $X$ is discrete.
\end{enumerate}
\end{theorem}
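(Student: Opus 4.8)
The plan is to prove the equivalence by the two easy implications $(3)\Rightarrow(1),(2)$ together with the two substantive ones $(1)\Rightarrow(3)$ and $(2)\Rightarrow(3)$, the latter two being handled by one common argument that derives a contradiction from the existence of a non-isolated point of $X$. For $(3)\Rightarrow(1),(2)$, suppose $X$ is discrete. Then the inclusion of $X$ into the abstract group $FP_{a}(X)$ (resp. $AP_{a}(X)$) carrying the discrete topology is continuous, so by the universal property it extends to a continuous homomorphism $FP(X)\to (FP_{a}(X),\text{discrete})$ which is the identity on the underlying group; since the target is discrete, this forces $FP(X)$ (resp. $AP(X)$) to be discrete, and a discrete group is trivially of almost countable type because every singleton is compact and open.

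For the hard directions I would first convert ``almost countable type'' into ``first-countable at $e$''. As $AP(X)$ and $FP(X)$ are Hausdorff for functionally Hausdorff $X$ (for $AP(X)$ this follows from Lemma~\ref{l1}, since the coarser reflection $A(X)$ is Hausdorff), Proposition~\ref{p3} supplies a compact subgroup $H$ of countable character. The pivotal point is that \emph{every compact subgroup of $AP(X)$ is trivial}: if $e\neq h\in H$, then, the underlying group being free abelian and hence torsion-free, the cyclic subgroup $\langle h\rangle=\{mh:m\in\mathbb{Z}\}$ is infinite, and since $\mathrm{length}(mh)=|m|\cdot\mathrm{length}(h)$ it meets each $AP_{n}(X)$ in a finite set; by Lemma~\ref{l0} it is therefore closed and discrete, contradicting its being an infinite subset of the compact set $H$. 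Hence $H=\{e\}$, so $\{e\}$ is of countable character and $AP(X)$ is first-countable at $e$.

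It then remains to show that, when $X$ is not discrete, $\{e\}$ cannot be of countable character; this is the crux. Fix a non-isolated point $x_{0}\in X$ and suppose, for contradiction, that $\{U_{n}:n\in\mathbb{N}\}$ is a base at $e$. For each $n$ the map $\mu_{n}\colon X^{n}\to AP(X)$ given by $\mu_{n}(a_{1},\dots,a_{n})=\sum_{i=1}^{n}(a_{i}-x_{0})$ is continuous and carries the diagonal point $(x_{0},\dots,x_{0})$ to $e\in U_{n}$, so there is an open $V_{n}\ni x_{0}$ with $\mu_{n}(V_{n}^{n})\subset U_{n}$. Since $x_{0}$ is not isolated I may choose $y_{n}\in V_{n}\setminus\{x_{0}\}$ and put $q_{n}:=n(y_{n}-x_{0})=ny_{n}-nx_{0}\in U_{n}$, a word of reduced length exactly $2n$. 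The set $P=\{q_{n}:n\in\mathbb{N}\}$ thus meets each $AP_{m}(X)$ in a finite set, so by Lemma~\ref{l0} it is closed and discrete; as $e\notin P$, the open set $O:=AP(X)\setminus P$ contains $e$. But $\{U_{n}\}$ is a base at $e$, so $U_{N}\subset O$ for some $N$, whence $q_{N}\in U_{N}\subset O$ while $q_{N}\in P$, a contradiction. Therefore $X$ is discrete, proving $(2)\Rightarrow(3)$.

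The implication $(1)\Rightarrow(3)$ runs by the identical scheme inside the non-abelian group $FP(X)$: one replaces sums by products, uses that a non-trivial element of a free group has infinite order with $\mathrm{length}(h^{m})\to\infty$, and takes $q_{n}:=(y_{n}x_{0}^{-1})^{n}$, a reduced word of length $2n$; the only extra ingredient is the evident non-abelian analogue of Lemma~\ref{l0} for $FP(X)$. I expect the main obstacle to be exactly the step just carried out — producing, for each $n$, an element $q_{n}\in U_{n}$ whose reduced length grows without bound — since this is precisely what allows Lemma~\ref{l0} to manufacture a closed discrete set inside any alleged countable base at $e$; continuity of the $n$-fold group operation together with the non-isolatedness of $x_{0}$ is what makes the construction go through.
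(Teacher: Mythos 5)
Your handling of (3)$\Rightarrow$(1),(2), and your reduction of (2) to first-countability of $AP(X)$ at $e$ --- via Proposition~\ref{p3} together with the observation that a nontrivial element of a compact subgroup would generate an infinite cyclic subgroup meeting each $AP_{n}(X)$ finitely, hence closed and discrete by Lemma~\ref{l0}, contradicting compactness --- coincide with the paper's argument. Where you genuinely diverge is in finishing (2)$\Rightarrow$(3): the paper passes to the group reflection, quoting \cite{Lin2012} that countable character of $0$ in $AP(X)$ persists in $AP(X)^{\flat}$, identifies $AP(X)^{\flat}$ with $A(X)$ by Lemma~\ref{l1}, and then invokes Graev's theorem, whereas you argue directly: the maps $\mu_{n}$ are continuous (only joint continuity of addition and translation by the constant $-x_{0}$ are used, so no inverses are needed), the elements $q_{n}=ny_{n}-nx_{0}\in U_{n}$ have reduced length exactly $2n$, and Lemma~\ref{l0} applied to $\{q_{n}:n\in\mathbb{N}\}$ defeats any countable base at $e$. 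This is correct and buys a more self-contained proof, eliminating the appeals to \cite{Lin2012}, Lemma~\ref{l1} and Graev's theorem.

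The gap is in (1)$\Rightarrow$(3). You propose to run the same scheme inside $FP(X)$ and call the needed non-abelian analogue of Lemma~\ref{l0} ``evident.'' It is not: Lemma~\ref{l0} is a cited, nontrivial result stated only for $AP(X)$, and it does not transfer to $FP(X)$ by pushing forward along the canonical homomorphism $\varphi\colon FP(X)\to AP(X)$, because $\varphi$ can drastically shorten reduced length (words of the form $xyx^{-1}\cdots$), so a set meeting each $FP_{n}(X)$ finitely may have image meeting some fixed $AP_{m}(X)$ infinitely. The $FP$-version may well be true, but it is not available in the paper and you give no proof, so as written (1)$\Rightarrow$(3) is incomplete. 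The paper sidesteps this entirely: $AP(X)$ is a quotient of $FP(X)$, so Proposition~\ref{p0} yields (1)$\Rightarrow$(2) at once, and then only the abelian Lemma~\ref{l0} is ever needed. Replacing your direct (1)$\Rightarrow$(3) by this one-line reduction closes the gap.
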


\begin{proof}
Since $AP(X)$ is a quotient of $FP(X)$, it follows from Proposition~\ref{p0} that (1)$\Rightarrow$(2). It is also clear that $AP(X)$ and $FP(X)$ over a  discrete space $X$ are discrete. Therefore, it suffices to verify that (2)$\Rightarrow$(3).

By Proposition~\ref{p3}, there exists a compact subgroup $L\subset AP(X)$ that is of countable character in $AP(X)$.

{\bf Claim:} $L=\{0\}$.

Suppose that $L\neq 0$. Take an element $g\in L\setminus\{0\}$. Then $\langle g\rangle$ is an infinite cyclic group of $L$ generated by $g$. Then the intersection $\langle g\rangle \cap AP_{n}(X)$ is finite for each $n\in\omega$, hence it follows from Lemma~\ref{l0} that $\langle g\rangle$ is closed and discrete in $L$. However, $L$  is compact, which is a contradiction.

By Claim, the neutral element 0 is of countable character in $AP(X)$. By \cite{Lin2012}, the neutral element 0 is of countable character in $AP(X)^{\flat}$. It follows from Lemma~\ref{l1} that the free abelian topological group $A(X)$ is first-countable, so $X$ is discrete by Graev's theorem in \cite{GM}.
\end{proof}

By Proposition~\ref{p5}, we can show the following theorem by a similar proof in Theorem~\ref{t2}.

\begin{theorem}\label{t3}
Let $X$ be functionally Hausdorff space. If $AP(X)$ is of almost subcountable type then $X$ is of countable pseudocharacter.
\end{theorem}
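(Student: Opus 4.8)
The plan is to mirror the proof of Theorem~\ref{t2}, substituting Proposition~\ref{p5} for Proposition~\ref{p3} at the outset and then tracking the weaker conclusion (countable pseudocharacter in place of countable character) through the argument. Since $AP(X)$ is assumed to be of almost subcountable type, Proposition~\ref{p5} immediately yields a compact subgroup $L\subset AP(X)$ that is of countable pseudocharacter in $AP(X)$. The first substantive step is to show that $L=\{0\}$, and this part of the reasoning is identical to the corresponding Claim in Theorem~\ref{t2}: if some $g\in L\setminus\{0\}$ existed, then the infinite cyclic group $\langle g\rangle$ would meet each $AP_{n}(X)$ in a finite set, so by Lemma~\ref{l0} it would be closed and discrete in $AP(X)$, hence closed and discrete in the compact set $L$, contradicting compactness.

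Having established $L=\{0\}$, the key consequence is that the neutral element $0$ is a $G_{\delta}$-set in $AP(X)$; that is, $AP(X)$ has countable pseudocharacter at $0$, and by homogeneity $AP(X)$ has countable pseudocharacter everywhere. The next step is to transfer this property to the free abelian topological group $A(X)$. Here I would use the group reflexion: since countable pseudocharacter of $0$ is witnessed by a countable family of open sets whose intersection is $\{0\}$, and since the reflexion topology $\tau^{\flat}$ is coarser than $\tau$, a short argument (analogous to the invocation of \cite{Lin2012} in Theorem~\ref{t2}) should give that $0$ is also a $G_{\delta}$-set in $AP(X)^{\flat}$. Then Lemma~\ref{l1} identifies $AP(X)^{\flat}$ with $A(X)$, so $A(X)$ has countable pseudocharacter.

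The final step is to pull the conclusion back from $A(X)$ to $X$. Since $X$ embeds topologically as a closed subspace of $A(X)$, and countable pseudocharacter is inherited by subspaces, $X$ itself has countable pseudocharacter, which is exactly the desired conclusion.

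I expect the main obstacle to be the transfer of countable pseudocharacter across the reflexion map in the second paragraph. In Theorem~\ref{t2} the analogous transfer of \emph{countable character} is handled by citing \cite{Lin2012}, and the cited fact there concerns character rather than pseudocharacter; so I would need to check carefully that the reflexion $\tau^{\flat}$ preserves the $G_{\delta}$-property of the singleton $\{0\}$, rather than merely preserving countable character. The delicate point is that $\tau^{\flat}$ is strictly coarser than $\tau$, so a countable family of $\tau$-open sets separating $0$ need not consist of $\tau^{\flat}$-open sets; one must argue that each can be replaced by (or already is) a $\tau^{\flat}$-open set, presumably using that $AP(X)$ is functionally Hausdorff so that the relevant separating sets arise from continuous real-valued functions, which remain continuous for the topological-group topology. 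Everything else is a routine adaptation of the earlier theorem.
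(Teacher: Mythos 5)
Your first paragraph is exactly the paper's intended argument: the paper proves Theorem~\ref{t3} by saying it is ``a similar proof'' to Theorem~\ref{t2} with Proposition~\ref{p5} in place of Proposition~\ref{p3}, and the Claim that $L=\{0\}$ goes through verbatim (the cyclic group $\langle g\rangle$ meets each $AP_n(X)$ in a finite set, Lemma~\ref{l0} makes it closed and discrete, contradicting compactness of $L$). Up to the conclusion that $\{0\}$ is a $G_\delta$-set in $AP(X)$, you are on the paper's track.

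The problem is your second and third paragraphs. The detour through the group reflexion and $A(X)$ is both unnecessary and, as you sketch it, broken. It is unnecessary because the desired conclusion is only that $X$ has countable pseudocharacter, and $X$ sits in $AP(X)$ as a subspace (this is immediate from the Elfard--Nickolas neighborhood description, Theorem~\ref{t5}); once $\{0\}$ is a $G_\delta$ in $AP(X)$, homogeneity gives that every point of $AP(X)$ is a $G_\delta$, and countable pseudocharacter passes to the subspace $X$. Done. In Theorem~\ref{t2} the passage to $A(X)^{\flat}\cong A(X)$ is genuinely needed because ``$X$ is first-countable'' does not yield ``$X$ is discrete''--- one must invoke Graev's theorem about the free \emph{topological} group; no such deeper fact is needed here. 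Worse, the repair you propose for the reflexion step is backwards: $\tau^{\flat}$ is \emph{coarser} than $\tau$, so a $\tau$-continuous real-valued function (or a $\tau$-open separating set) need not be $\tau^{\flat}$-continuous (or $\tau^{\flat}$-open); continuity with respect to the coarser topology is the \emph{stronger} condition. So the transfer of the $G_\delta$-property of $\{0\}$ from $AP(X)$ to $AP(X)^{\flat}$ is not justified by your argument, and indeed the analogue of the \cite{Lin2012} fact used in Theorem~\ref{t2} concerns character, not pseudocharacter. Drop the reflexion step entirely and finish with the subspace argument, and the proof is correct and matches what the paper intends.
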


\begin{question}
Let $X$ be functionally Hausdorff space that is of countable pseudocharacter. Is $FP(X)$ or $AP(X)$ of almost subcountable type?
\end{question}

\begin{question}
Let $X$ be functionally Hausdorff space. If $FP(X)$ is of almost subcountable type, is $AP(X)$ of almost subcountable type?
\end{question}

\begin{lemma}\label{l3}\cite{EN2013}
If $X$ is a $T_{1}$-space, then the mapping $$i_{2}\mid _{i_{2}^{-1}(FP_{2}(X)\setminus FP_{1}(X))}: i_{2}^{-1}(FP_{2}(X)\setminus FP_{1}(X))\longrightarrow FP_{2}(X)\setminus FP_{1}(X)$$ is a homeomorphism.
\end{lemma}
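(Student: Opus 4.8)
The plan is to pin down the domain of the map explicitly, reduce the assertion to the continuity of two coordinate ``letter'' maps, and then establish that continuity from the group operations of $FP(X)$ together with the $T_1$ separation of $X$. First I would describe $D:=i_{2}^{-1}(FP_{2}(X)\setminus FP_{1}(X))$. A pair $(a,b)\in\tilde{X}^{2}$ satisfies $i_{2}(a,b)=ab\in FP_{2}(X)\setminus FP_{1}(X)$ precisely when $ab$ is reduced of length exactly $2$; since $e$ contributes nothing and a cancelling pair $b=a^{-1}$ yields the neutral element, this forces $a,b\in X\cup X^{-1}$ with $b\neq a^{-1}$, and conversely every such pair gives a reduced word of length $2$. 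Thus $D=\{(a,b):a,b\in X\cup X^{-1},\ b\neq a^{-1}\}$, carrying the subspace (hence product) topology from $\tilde{X}^{2}$. By the uniqueness of the reduced form recalled before the lemma, $\varphi:=i_{2}|_{D}$ is a bijection onto $FP_{2}(X)\setminus FP_{1}(X)$, and it is continuous since $i_{2}$ is. Everything therefore reduces to showing that $\varphi^{-1}$ is continuous.

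Next I would reduce the continuity of $\varphi^{-1}$ to that of the first-letter map $L\colon ab\mapsto a$ and the last-letter map $R\colon ab\mapsto b$ from $FP_{2}(X)\setminus FP_{1}(X)$ into $X\cup X^{-1}$: because $D$ carries the product topology, $\varphi^{-1}=(L,R)$ is continuous if and only if both $L$ and $R$ are. It is convenient to split $FP_{2}(X)\setminus FP_{1}(X)$ into the four sign-sectors according to which of $X$, $X^{-1}$ contains the first and the last letter, and to treat $L$ and $R$ on each sector separately; on a fixed sector there is no cancellation ambiguity, so it suffices to detect the underlying point of each letter.

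The core step is the continuity of $R$, the argument for $L$ being symmetric via left translations. Here I would exploit that, for each fixed letter $t\in X\cup X^{-1}$, the right translation $\varrho_{t}\colon w\mapsto wt^{-1}$ is a homeomorphism of $FP(X)$ (translations are homeomorphisms in any paratopological group, independently of the continuity of inversion), and that a reduced word $w$ of length $2$ has last letter $t$ exactly when $\varrho_{t}(w)\in FP_{1}(X)$. Since $X$ is $T_{1}$, the set $FP_{1}(X)=\tilde{X}$ is closed in $FP(X)$, so each locus $\{w:\text{last letter}=t\}$ is closed in $FP_{2}(X)\setminus FP_{1}(X)$. To upgrade these closed ``equality'' loci to the open condition $R(w)\in V$ required for continuity, I would, given a basic open $V\ni b_{0}$, use the openness of multiplication in $FP(X)$ (for open $U',V'\subseteq FP(X)$ one has $U'V'=\bigcup_{v\in V'}U'v$ open) to produce a candidate neighborhood $\Omega\cap(FP_{2}(X)\setminus FP_{1}(X))$ of $w_{0}=a_{0}b_{0}$, and then shrink $U',V'$ about $a_{0},b_{0}$ so that, by $T_{1}$-separating $b_{0}$ from the finitely many competing points of $X$, every reduced length-$2$ word in $\Omega$ is forced to have its last letter in $V$.

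The main obstacle is exactly this final point: a reduced word $w=cd$ lying in a product set $U'V'$ need not have its \emph{reduced} factorization inside $U'\times V'$, since in $FP(X)$ one may also write $w=(cdt^{-1})(t)$ for other letters $t$; excluding such spurious factorizations---equivalently, showing that the reduced first and last letters depend continuously on $w$---is where the $T_{1}$ hypothesis and the explicit local description of the topology of $FP(X)$ from \cite{EN2012} must be invoked. Once $L$ and $R$ are continuous on each sector, they are continuous on $FP_{2}(X)\setminus FP_{1}(X)$, whence $\varphi^{-1}=(L,R)$ is continuous and $\varphi$ is the asserted homeomorphism.
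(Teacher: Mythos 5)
First, note that the paper itself offers no proof of this lemma: it is quoted from \cite{EN2013}, so there is no internal argument to compare yours against. Judged on its own terms, your proposal correctly identifies the domain $D=\{(a,b): a,b\in X\cup X^{-1},\ b\neq a^{-1}\}$, correctly reduces the problem to the continuity of the first- and last-letter maps $L$ and $R$, and correctly observes that bijectivity follows from uniqueness of reduced words and that continuity of $i_2$ gives one direction. But the proof stops exactly where it has to start: you yourself label the continuity of $L$ and $R$ as ``the main obstacle'' and defer it to ``the explicit local description of the topology of $FP(X)$ from \cite{EN2012}'' without carrying that step out. A proof that announces where the essential difficulty lies and then does not resolve it is not a proof. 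Moreover, the route you sketch toward that step would not work as written: closedness of each fiber $\{w: \text{last letter}=t\}=\varrho_t^{-1}(FP_1(X))$ says nothing about continuity of $R$, and the phrase ``$T_1$-separating $b_0$ from the finitely many competing points of $X$'' has no content here --- there is no finiteness available among candidate last letters in a product neighborhood.

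The irony is that the missing step is short once one actually invokes Theorem~\ref{t4} of the paper. For a reduced word $w=x_1^{\epsilon_1}x_2^{\epsilon_2}$ of length $2$, that theorem provides a neighborhood base at $w$ in $FP_2(X)$ consisting of product sets $U_1^{\epsilon_1}U_2^{\epsilon_2}$ (with $U_i=\{x_i\}$ when $\epsilon_i=-1$). Any element of such a set is, by definition of the product, equal to $u_1^{\epsilon_1}u_2^{\epsilon_2}$ with $u_i\in U_i$; if it has length $2$, uniqueness of the reduced form forces this to be its reduced factorization, so its $i_2$-preimage lies in $(U_1^{\epsilon_1}\times U_2^{\epsilon_2})\cap D$. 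This immediately gives continuity of $\varphi^{-1}$ at $w$ (after shrinking the $U_i$ into prescribed neighborhoods of the letters, which is possible because the inclusion $X\hookrightarrow FP(X)$ is continuous). Your worry about ``spurious factorizations'' such as $w=(cdt^{-1})t$ is a red herring: those factors do not lie in $\tilde{X}$, and one does not need to exclude other factorizations --- one only needs the single factorization inside $U_1^{\epsilon_1}\times U_2^{\epsilon_2}$ that the product set hands you, together with uniqueness of reduced words. As it stands, your submission is a correct reduction plus an unexecuted (and partly misdirected) plan for the decisive step, so it does not establish the lemma.
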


Recall that a subset $A$ in a space $Z$ is called {\it bounded} (in $Z$) if every real-valued
continuous function on $Z$ is bounded on $A$.

The proof of the following theorem is a modification of \cite[Lemma 2.1]{T2003}. However, for the completeness, we give out the proof.

\begin{theorem}\label{t7}
If $FP_{2}(X)$ is of pointwise countable type, then $X\setminus I(X)$ is bounded in $X$.
\end{theorem}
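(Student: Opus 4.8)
The plan is to argue by contradiction, converting the unboundedness of $X\setminus I(X)$ into a discrete family of open sets in $X$, and then lifting that family to an infinite closed discrete subset of $FP_2(X)$ whose existence is incompatible with a compact set of countable character at the neutral element. Assume $A:=X\setminus I(X)$ is not bounded, and fix a continuous $f:X\to[0,\infty)$ unbounded on $A$. Choose $a_n\in A$ with $f(a_{n+1})>f(a_n)+2$, and put $I_n=(f(a_n)-1,f(a_n)+1)$ and $W_n=f^{-1}(I_n)$. Since the $I_n$ are pairwise disjoint and locally finite in $\mathbb{R}$ and $f$ is continuous, $\{W_n:n\in\mathbb{N}\}$ is a discrete family of open sets in $X$ with $a_n\in W_n$. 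As each $a_n$ is non-isolated, every neighbourhood of $a_n$ lying in $W_n$ contains a point of $W_n$ distinct from $a_n$; the precise choice of these points is fixed in the next step.

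Next I would invoke pointwise countable type at $e\in FP_1(X)\subseteq FP_2(X)$: there is a compact $K\ni e$ with a decreasing base $\{O_m:m\in\mathbb{N}\}$ of open neighbourhoods of $K$ in $FP_2(X)$. Because $i_2$ is continuous and $i_2(a_n,a_n^{-1})=e\in O_n$, the set $i_2^{-1}(O_n)$ is a neighbourhood of $(a_n,a_n^{-1})$ in $\tilde{X}^2$; so I choose $b_n\in W_n$ with $b_n\neq a_n$ close enough to $a_n$ that $s_n:=b_na_n^{-1}=i_2(b_n,a_n^{-1})\in O_n$. Each $s_n$ is a reduced word of length $2$, so $s_n\in FP_2(X)\setminus FP_1(X)$, and by Lemma~\ref{l3} the equality $s_n=s_m$ forces $(b_n,a_n^{-1})=(b_m,a_m^{-1})$, hence $n=m$; thus $S:=\{s_n:n\in\mathbb{N}\}$ is infinite.

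The first half of the contradiction is a routine compactness argument. Since $\{O_m\}$ decreases to a base at the compact set $K$ and $s_n\in O_n\subseteq O_m$ for all $n\ge m$, every neighbourhood of $K$ contains all but finitely many $s_n$. If no point of $K$ were an accumulation point of $S$, compactness would produce an open $V\supseteq K$ with $V\cap S$ finite together with some $O_m\subseteq V$, contradicting $\{s_n:n\ge m\}\subseteq O_m$. Hence $S$ has an accumulation point in $K\subseteq FP_2(X)$.

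It then remains to prove that $S$ is closed and discrete in $FP_2(X)$, which contradicts the preceding paragraph and finishes the argument. On the stratum $FP_2(X)\setminus FP_1(X)$ this is straightforward: by Lemma~\ref{l3} this stratum is homeomorphic to $i_2^{-1}(FP_2(X)\setminus FP_1(X))\subseteq\tilde{X}^2$, under which $s_n\mapsto(b_n,a_n^{-1})$ with $b_n\in W_n$; since the coordinate projections are continuous on this stratum and $\{W_n\}$ is discrete in $X$, the set $S$ can have no accumulation point inside $FP_2(X)\setminus FP_1(X)$. The hard part, and the main obstacle, is to exclude accumulation of $S$ at a word of length $\le 1$, that is, at a point of $FP_1(X)=X\cup\{e\}\cup X^{-1}$, and in particular at $e$. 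Lemma~\ref{l3} gives no control here, since it describes the topology only away from $FP_1(X)$; moreover the homomorphisms of $FP(X)$ into $\mathbb{R}$ cannot detect this collapse, because $f(b_n)-f(a_n)\to0$ by the choice of $b_n$. I therefore expect to have to use the explicit description of neighbourhoods of $FP_1(X)$ inside the free paratopological group — this is exactly where the argument modifies \cite[Lemma 2.1]{T2003} — to show that words $b_na_n^{-1}$ built from a \emph{discrete} family $\{W_n\}$ cannot converge to a shorter word, since a prospective limit in $FP_1(X)$ would force infinitely many of the sets $W_n$ to cluster, contradicting their discreteness. Granting this, $S$ is closed and discrete, the required contradiction follows, and $X\setminus I(X)$ is bounded in $X$.
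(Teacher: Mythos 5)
Your overall strategy coincides with the paper's: assume $X\setminus I(X)$ is unbounded, extract a discrete family of open sets $\{W_n\}$ meeting $X\setminus I(X)$, use the non-isolated points $a_n$ and the countable base $\{O_n\}$ at the compact set $K\ni e$ to choose $b_n\in W_n\setminus\{a_n\}$ with $b_na_n^{-1}\in O_n$, and derive a contradiction by showing the resulting set $S$ is closed and discrete while also clustering at $K$. The construction of the discrete family from an unbounded function, the choice of the $s_n$, the accumulation argument, and the treatment of the stratum $FP_2(X)\setminus FP_1(X)$ via Lemma~\ref{l3} are all correct and match the paper's proof.

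However, there is a genuine gap, and you identify it yourself: you never prove that $S$ cannot accumulate at a point of $FP_1(X)$, and in particular at $e$; you end that part with ``Granting this.'' This is not a routine detail --- it is the one step of the whole argument that uses the fine structure of the topology of $FP_2(X)$ near the identity, and your heuristic for it (that a limit in $FP_1(X)$ ``would force infinitely many of the sets $W_n$ to cluster'') does not point at the actual mechanism: by Theorem~\ref{t6} the words $b_na_n^{-1}=k_2(a_n,b_n)$ avoid a basic neighbourhood $j_2(U)\cup k_2(U)$ of $e$ exactly when $(a_n,b_n)\notin U$ for a \emph{single} entourage $U\in\mathscr{U}^{\ast}$, so what must be produced is one entourage (equivalently, one continuous quasi-pseudometric) separating \emph{all} the pairs $(a_n,b_n)$ simultaneously. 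The paper does precisely this: since $\{V_n\}$ is discrete and $x_n\neq y_n$ in $V_n$, it chooses a continuous quasi-pseudometric $d$ on $X$ with $d(x_n,y_n)\geq 1$ for every $n$, and then $\{g\in FP(X):\tilde d(e,g)<1\}\cap FP_2(X)$, with $\tilde d$ the Graev extension of $d$, is an open neighbourhood of $e$ missing $B$. (The remaining subcase, accumulation at a point of $X\cup X^{-1}$, is disposed of because $X\cup X^{-1}$ is open in $FP_2(X)$ and disjoint from $B$.) Note also that your suggestion that one should control ``neighbourhoods of $FP_1(X)$'' at nonidentity length-one words is not where the difficulty lies; the essential point is the identity, and it is handled by the quasi-uniform/quasi-pseudometric description of its neighbourhood base, not by Lemma~\ref{l3} or by any clustering of the $W_n$. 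Until this separation argument is supplied, the proof is incomplete.
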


\begin{proof}
From the hypothesis, there exist in $FP_{2}(X)$ a compact set $K$ containing the neutral element $e$ and a decreasing family of open sets $\{U_{n}: n\in\mathbb{N}\}$ which forms a base for $FP_{2}(X)$ at $K$. Suppose that $X\setminus I(X)$ is not bounded in $X$. Then there exists in $X$ a discrete family of open sets $\{V_{n}: n\in\mathbb{N}\}$ each of which intersects $X\setminus I(X)$. For each $n\in\mathbb{N}$, choose a point $x_{n}\in V_{n}\cap (X\setminus I(X))$. Since, for each $n\in \mathbb{N}$, the point $x_{n}$ is a non-isolated point, it is easy to see that there exists a point $y_{n}\in V_{n}$  with $x_{n}\neq y_{n}$ and $x_{n}^{-1}y_{n}\in U_{n}$. Clearly, we also have $x_{n}^{-1}y_{n}\neq x_{m}^{-1}y_{m}$ for distinct $m, n\in\mathbb{N}$. Put $B=\{x_{n}^{-1}y_{n}: n\in \mathbb{N}\}$.

{\bf Claim:} The set $B$ is closed and discrete in $FP_{2}(X)$.

Indeed, it suffices to show that $B$ is closed in $FP_{2}(X)$. Let $w$ belong to the closure of $B$ in $FP_{2}(X)$. Next we shall show that $w\in B$.

{\bf Case 1:} $w\in X\cup X^{-1}$.

Obviously, the set $X\cup X^{-1}$ is open in $FP_{2}(X)$ and $(X\cup X^{-1})\cap B=\emptyset$, then $w$ is not in the closure of $B$ in $FP_{2}(X)$, this is a contradiction.

{\bf Case 2:} $w=e$.

Since the family $\{V_{n}: n\in\mathbb{N}\}$ is discrete and $x_{n}$ and $y_{n}$ are distinct points in $V_{n}$, we can find a continuous quasi-pseudometric $d$
on $X$ such that $d(x_{n}, y_{n})\geq 1$ for each $n\in\mathbb{N}$. Then the set $O_{2}(d)=\{g\in FP(X): \tilde{d}(e, g)<1\}\cap FP_{2}(X)$ is an open neighborhood of $e$ in $FP_{2}(X)$ disjoint from $B$, where $\tilde{d}$ is the Graev extension of $d$ to $FP(X)$. Thus $w$ is not in the closure of $B$ in $FP_{2}(X)$, this is a contradiction.

{\bf Case 3:} $w=FP_{2}(X)\setminus FP_{1}(X)$.

Clearly, the set $\{(x_{n}^{-1}, y_{n}): n\in \mathbb{N}\}$ is closed in $i_{2}^{-1}(FP_{2}(X)\setminus FP_{1}(X))$. By Lemma~\ref{l3}, it is easy to see that $w\in B$.

Since $K$ is compact, it cannot contain an infinite subset of $B$. Without loss of generality, we may assume that $B\cap K=\emptyset$. Then $FP_{2}(X)\setminus B$ is an open set containing $K$ but not containing $U_{n}$ for any $n\in\mathbb{N}$, which is a contradiction. Therefore, we conclude that $X\setminus I(X)$ is bounded in $X$, as required.
\end{proof}

If the closure of every bounded set in a space $Z$ is compact,
then $Z$ is called a $\mu$-space.

\begin{corollary}
If $FP_{2}(X)$ is of pointwise countable type and $X$ is a $\mu$-space, then $X\setminus I(X)$ is compact.
\end{corollary}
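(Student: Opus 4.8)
The plan is to derive this corollary directly from Theorem~\ref{t7} by invoking the $\mu$-space hypothesis. By Theorem~\ref{t7}, the assumption that $FP_{2}(X)$ is of pointwise countable type immediately gives that $X\setminus I(X)$ is bounded in $X$. So the only remaining work is to upgrade ``bounded'' to ``compact.''

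First I would recall the definition of a $\mu$-space: in a $\mu$-space, the closure of every bounded set is compact. Applying this to the bounded set $X\setminus I(X)$, we conclude that $\overline{X\setminus I(X)}$ is compact. The mild subtlety is that the corollary asserts that $X\setminus I(X)$ \emph{itself} is compact, not merely that its closure is. To close this gap I would observe that $X\setminus I(X)$ is already closed in $X$: the set $I(X)$ of isolated points is open (each isolated point is, by definition, an open singleton, so $I(X)$ is a union of open sets), hence its complement $X\setminus I(X)$ is closed. Therefore $X\setminus I(X)=\overline{X\setminus I(X)}$, and this set is compact.

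There is essentially no obstacle here, since the theorem does all the real work; the corollary is a one-line consequence once one notes that $X\setminus I(X)$ is closed. The complete argument is as follows. Since $FP_{2}(X)$ is of pointwise countable type, Theorem~\ref{t7} shows that $X\setminus I(X)$ is bounded in $X$. Because $I(X)$ is open in $X$, the set $X\setminus I(X)$ is closed in $X$, so it coincides with its own closure. As $X$ is a $\mu$-space, the closure of the bounded set $X\setminus I(X)$ is compact, and hence $X\setminus I(X)$ is compact.
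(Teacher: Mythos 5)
Your argument is correct and matches the paper's (implicit) reasoning: the paper treats this as an immediate consequence of Theorem~\ref{t7} together with the $\mu$-space definition, exactly as you do. Your extra observation that $X\setminus I(X)$ is closed (since $I(X)$ is open) is the right way to pass from ``closure is compact'' to ``the set itself is compact,'' a small point the paper leaves unstated.
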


\begin{question}
If $FP_{2}(X)$ is of pointwise countable type, is $X\setminus I(X)$  compact?
\end{question}

\begin{theorem}\label{t4}\cite{EN2012}
Let $X$ be a $T_{1}$-space and let $w=x_{1}^{\epsilon_{1}}x_{2}^{\epsilon_{2}}\cdots x_{n}^{\epsilon_{n}}$ be a reduced word in $FP_{n}(X)$, where $x_{i}\in X$ and $\epsilon_{i}=\pm 1$, for all $i=1, 2, \cdots, n$, and if $x_{i}=x_{i+1}$ for some $i=1, 2, \cdots, n-1$, then $\epsilon_{i}=\epsilon_{i+1}$. Then the collection $\mathscr{B}$ of all sets of the form $U_{1}^{\epsilon_{1}}U_{2}^{\epsilon_{2}}\cdots U_{n}^{\epsilon_{n}}$, where, for all $i=1, 2, \cdots, n$, the set $U_{i}$ is a neighborhood of $x_{i}$ in $X$ when $\epsilon_{i}=1$ and $U_{i}=\{x_{i}\}$ when $\epsilon_{i}=-1$ is a base for the neighborhood system at $w$ in $FP_{n}(X)$.
\end{theorem}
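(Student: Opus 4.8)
My plan is to prove the two inclusions that together say ``$\mathscr{B}$ is a neighborhood base at $w$'': (A) each $B\in\mathscr{B}$ is a neighborhood of $w$ in $FP_{n}(X)$, and (B) each neighborhood of $w$ in $FP_{n}(X)$ contains some $B\in\mathscr{B}$. Write $p=(x_{1}^{\epsilon_{1}},\dots,x_{n}^{\epsilon_{n}})\in\tilde{X}^{n}$, so $i_{n}(p)=w$. Since each $\epsilon_{i}=\pm1$ and $x_{i}=x_{i+1}$ forces $\epsilon_{i}=\epsilon_{i+1}$, the word $w$ is reduced of length exactly $n$, so $w\in FP_{n}(X)\setminus FP_{n-1}(X)$. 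For a basic set $B=U_{1}^{\epsilon_{1}}\cdots U_{n}^{\epsilon_{n}}$ I associate the box $T=\prod_{i=1}^{n}T_{i}\subseteq\tilde{X}^{n}$, where $T_{i}=U_{i}$ if $\epsilon_{i}=1$ and $T_{i}=\{x_{i}^{-1}\}$ if $\epsilon_{i}=-1$, so that $B=i_{n}(T)$.

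Direction (B) is the soft half and needs only continuity of $i_{n}$. If $O$ is open in $FP_{n}(X)$ with $w\in O$, then $i_{n}^{-1}(O)$ is an open neighborhood of $p$ in $\tilde{X}^{n}$, hence contains a box $\prod_{i=1}^{n}W_{i}$ with each $W_{i}$ a neighborhood of $x_{i}^{\epsilon_{i}}$ in $\tilde{X}$. Setting $U_{i}=W_{i}\cap X$ when $\epsilon_{i}=1$ and $U_{i}=\{x_{i}\}$ when $\epsilon_{i}=-1$, one checks $U_{i}^{\epsilon_{i}}\subseteq W_{i}$ in both cases, so $B=U_{1}^{\epsilon_{1}}\cdots U_{n}^{\epsilon_{n}}\subseteq W_{1}\cdots W_{n}\subseteq O$ with $w\in B\in\mathscr{B}$.

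For Direction (A) I would localise on the top stratum and transport neighborhoods through $i_{n}$, using two structural facts valid for $T_{1}$ spaces: that $FP_{n-1}(X)$ is closed in $FP_{n}(X)$, so $FP_{n}(X)\setminus FP_{n-1}(X)$ is open and contains $w$; and the $n$-letter analogue of Lemma~\ref{l3} (from \cite{EN2013}), that $i_{n}$ restricts to a homeomorphism of $i_{n}^{-1}(FP_{n}(X)\setminus FP_{n-1}(X))$ onto $FP_{n}(X)\setminus FP_{n-1}(X)$. Granting these, neighborhoods of $w$ in $FP_{n}(X)$ coincide with neighborhoods of $w$ in the open subspace $FP_{n}(X)\setminus FP_{n-1}(X)$, and the homeomorphism reduces matters to finding neighborhoods of $p$. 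It then suffices to show that $T$ is a neighborhood of $p$ in $\tilde{X}^{n}$: for then $T\cap i_{n}^{-1}(FP_{n}(X)\setminus FP_{n-1}(X))$ is a neighborhood of $p$, and its image $i_{n}(T)\cap(FP_{n}(X)\setminus FP_{n-1}(X))=B\cap(FP_{n}(X)\setminus FP_{n-1}(X))$ is a neighborhood of $w$ contained in $B$, so $B$ is a neighborhood of $w$. Because $\tilde{X}^{n}$ has the product topology, this last step splits into the one-letter assertions that $U_{i}$ is a neighborhood of $x_{i}$ in $\tilde{X}$ when $\epsilon_{i}=1$, and that $\{x_{i}^{-1}\}$ is a neighborhood of $x_{i}^{-1}$ in $\tilde{X}$ when $\epsilon_{i}=-1$.

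Thus the theorem collapses to the case $n=1$, which is where the asymmetry between positive and negative letters --- and the main obstacle --- resides. That $X$ is open in $\tilde{X}=FP_{1}(X)$ is comparatively soft. The hard point is that each $x^{-1}$ is isolated in $\tilde{X}$, i.e. that $X^{-1}$ is discrete in $FP(X)$; this is precisely what forbids thickening a negative letter, and it reflects the discontinuity of inversion in a paratopological group. I expect the genuine work to be concentrated here, and I would establish it from the $T_{1}$-separation of $X$ together with the canonical one-sided description of the free paratopological group topology (equivalently, Graev extensions of continuous quasi-pseudometrics on $X$) of \cite{RS,EN2012}, which isolates inverted generators. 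Once $n=1$ is in hand, the reduction in the previous paragraph yields the full statement.
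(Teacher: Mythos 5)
The paper offers no proof of this statement at all---it is quoted verbatim from \cite{EN2012}---so there is nothing internal to compare your argument against; judged on its own terms, your proposal has a genuine gap in Direction (A), and it is a structural one. The soft half (B) is fine. But for (A) you invoke ``the $n$-letter analogue of Lemma~\ref{l3}'', i.e.\ that $i_{n}$ restricts to a homeomorphism of $i_{n}^{-1}(FP_{n}(X)\setminus FP_{n-1}(X))$ onto $FP_{n}(X)\setminus FP_{n-1}(X)$. Only the $n=2$ case is on record here, and it comes from the \emph{later} paper \cite{EN2013}, whose proof rests on the explicit neighborhood descriptions of \cite{EN2012}, i.e.\ on the very theorem you are trying to prove. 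Worse, once you also grant your $n=1$ facts ($X$ open in $\tilde{X}$ and $X^{-1}$ discrete in $\tilde{X}$), openness of $i_{n}$ on the top stratum at the point $p$ is \emph{equivalent} to Direction (A): it says exactly that the image of every neighborhood of $p$---cofinally, of every box $T$ of the stated form---is a neighborhood of $w$. So the reduction does not reduce anything; it relocates the entire content of the theorem into an unproved black box, and in the only logical order the literature supports, that box is a consequence of the theorem rather than an ingredient of it.

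The second gap is the $n=1$ case itself, which you correctly identify as the crux (isolation of the points of $X^{-1}$, reflecting the discontinuity of inversion) but only gesture at: ``I would establish it from \dots\ Graev extensions of continuous quasi-pseudometrics.'' Nothing is actually verified. A correct proof of Direction (A) must produce, for each basic set $B=U_{1}^{\epsilon_{1}}\cdots U_{n}^{\epsilon_{n}}$, an explicit open subset of $FP_{n}(X)$ squeezed between $w$ and $B$; in \cite{EN2012} this is done by exhibiting entourages of the universal quasi-uniformity (equivalently, Graev-type extensions of continuous quasi-pseudometrics) whose associated neighborhoods of $w$ land inside $B$, and that computation is precisely where the hypotheses are consumed: the $T_{1}$ separation, the singleton condition $U_{i}=\{x_{i}\}$ at negative letters, and the non-cancellation condition $x_{i}=x_{i+1}\Rightarrow\epsilon_{i}=\epsilon_{i+1}$. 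Your outline never engages with that computation, so the hard direction remains unproved.
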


\begin{theorem}\label{t5}\cite{EN2012}
Let $X$ be a $T_{1}$-space and let $w=\epsilon_{1}x_{1}+\epsilon_{2}x_{2}+\cdots+\epsilon_{n}x_{n}$ be a reduced word in $AP_{n}(X)$, where $x_{i}\in X$ and $\epsilon_{i}=\pm 1$, for all $i=1, 2, \cdots, n$, and if $x_{i}=x_{j}$ for some $i, j=1, 2, \cdots, n$, then $\epsilon_{i}=\epsilon_{j}$. Then the collection $\mathscr{B}$ of all sets of the form $\epsilon_{1}U_{1}+\epsilon_{2}U_{2}+\cdots+\epsilon_{n}U_{n}$, where, for all $i=1, 2, \cdots, n$, the set $U_{i}$ is a neighborhood of $x_{i}$ in $X$ when $\epsilon_{i}=1$ and $U_{i}=\{x_{i}\}$ when $\epsilon_{i}=-1$ is a base for the neighborhood system at $w$ in $AP_{n}(X)$.
\end{theorem}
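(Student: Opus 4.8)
The plan is to establish the two inclusions that together say $\mathscr{B}$ is a neighborhood base at $w$: first, that every member of $\mathscr{B}$ is a neighborhood of $w$ in $AP_{n}(X)$, and second, the substantive direction, that every neighborhood of $w$ contains some member of $\mathscr{B}$. Throughout I would work inside $AP(X)$ and then intersect with $AP_{n}(X)$, using repeatedly that in a paratopological group the translations $g\mapsto a+g$ are homeomorphisms (their inverses are again translations), even though inversion itself need not be continuous. The argument is the abelian counterpart of the non-abelian Theorem~\ref{t4}, with the reduction imposed on reduced words adapted to additive notation.

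For the first inclusion, write a basic set as $V=c+\sum_{\epsilon_{i}=1}U_{i}$, where $c=-\sum_{\epsilon_{i}=-1}x_{i}$ is a fixed element and each $U_{i}$ is an open neighborhood of $x_{i}$. Since the inclusion of $X$ into $AP(X)$ is a topological embedding on the positive generators, each such $U_{i}$ may be taken open in $AP(X)$; then $\sum_{\epsilon_{i}=1}U_{i}$ is a union of translates of an open set and hence open, and translating by $c$ keeps it open. Thus $V$ is open whenever at least one $\epsilon_{i}=1$. The degenerate case in which all $\epsilon_{i}=-1$ forces $V=\{w\}$, and here one must check separately that $w$ is isolated in $AP_{n}(X)$; this is exactly the place where the one-sidedness of the topology is used, since the inverse generators contribute only as singletons.

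For the second, hard inclusion I would follow the quasi-pseudometric method already exploited in the proof of Theorem~\ref{t7}. Given an open neighborhood $O$ of $w$ in $AP_{n}(X)$, I would produce a continuous quasi-pseudometric $d$ on $X$ whose Graev extension $\tilde{d}$ to $AP(X)$ confines a suitable $\tilde{d}$-ball around $w$ inside $O$, and then analyze which reduced words lie in that ball. The analysis proceeds by induction on $n$, using the stratification $AP_{n}(X)\setminus AP_{n-1}(X)$ of reduced words of length exactly $n$; on this top stratum the natural parametrization by tuples of generators is well behaved (the abelian analogue of Lemma~\ref{l3}), so that a word close to $w$ inside $O$ is close coordinatewise. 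The point is that closeness in the positive coordinates yields genuine neighborhoods $U_{i}$ of $x_{i}$, whereas in the negative coordinates, because inversion is not continuous, closeness can only be achieved by equality, which is what pins $U_{i}$ down to the singleton $\{x_{i}\}$ when $\epsilon_{i}=-1$.

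The main obstacle is precisely this asymmetric treatment of the negative coordinates, together with the bookkeeping needed to rule out cancellation: a word near $w$ could a priori collapse to shorter length, and one must use the reduced-word hypothesis, that $x_{i}=x_{j}$ implies $\epsilon_{i}=\epsilon_{j}$, to guarantee that no such cancellation occurs in a small enough neighborhood and that the length-$n$ structure of $w$ is preserved. Controlling these two phenomena at once, while keeping the Graev extension fine enough to separate $w$ from the lower stratum $AP_{n-1}(X)$, is where the real work lies; the remaining verifications are routine applications of the continuity of addition and of the embedding of $X$ into $AP(X)$.
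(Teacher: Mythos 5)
The paper does not prove Theorem~\ref{t5} at all: it is quoted verbatim from \cite{EN2012}, so there is no internal argument to compare yours against. Judged on its own terms, your proposal has the difficulty inverted and, at the one place where it commits to a concrete step, that step is wrong. The direction you call substantive --- every neighborhood of $w$ contains a member of $\mathscr{B}$ --- is in fact the routine one: the map $(u_{i})_{\epsilon_{i}=1}\mapsto \sum_{\epsilon_{i}=1}u_{i}-\sum_{\epsilon_{j}=-1}x_{j}$ from a finite power of $X$ into $AP(X)$ is continuous (it composes the canonical map of $X$ into $AP(X)$ with addition and a fixed translation, and never inverts a variable letter), so any open $O\ni w$ pulls back to open sets $U_{i}\ni x_{i}$ for the positive indices while the negative letters stay frozen at $\{x_{j}\}$. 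No Graev extension, stratification, or cancellation analysis is needed for this half; deploying that machinery here is effort spent where the theorem is already cheap.

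The genuinely hard half is the one you dispose of in two lines: showing that each set $\epsilon_{1}U_{1}+\cdots+\epsilon_{n}U_{n}$ is a neighborhood of $w$ in $AP_{n}(X)$. Your argument rests on the claim that, because $X$ embeds in $AP(X)$, ``each such $U_{i}$ may be taken open in $AP(X)$''; this is false. An embedding only says $U_{i}=V\cap X$ for some open $V$ in $AP(X)$, and $X$ is not an open subspace of $AP(X)$ (words of length $>1$ accumulate on $X$), so $u+U_{j}$ is not open in $AP(X)$ and the union-of-translates argument collapses. Indeed the basic sets are generally not open in $AP_{n}(X)$; what must be produced is, for each choice of the $U_{i}$, an open set of $AP(X)$ containing $w$ whose trace on $AP_{n}(X)$ lies inside $\epsilon_{1}U_{1}+\cdots+\epsilon_{n}U_{n}$. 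That is exactly where the description of the neighborhoods of $0$ by the universal quasi-uniformity (Theorem~\ref{t8} for $n=2$, and its general-$n$ analogue in \cite{EN2012}), the no-cancellation bookkeeping, and the reduced-word hypothesis $x_{i}=x_{j}\Rightarrow\epsilon_{i}=\epsilon_{j}$ must be used --- together with the special case, which you flag but never prove, that a word with all $\epsilon_{i}=-1$ is isolated in $AP_{n}(X)$. As it stands the proposal proves the easy inclusion by heavy and unnecessary means and leaves the hard inclusion resting on a false statement, so it is not a proof.
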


Let $X$ be a set. Define $j_{2}, k_{2}: X\times X\longrightarrow F_{a}(X)$ by $j_{2}(x, y)=x^{-1}y$ and $k_{2}(x, y)=yx^{-1}$.

By a {\it quasi-uniform space} $(X, \mathscr{U})$ we mean the natural analog of a {\it uniform space} obtained by dropping the symmetry axiom. We also recall that the {\it universal quasi-uniformity} $\mathscr{U}^{\ast}$ of a space $X$ is the finest quasi-uniformity on $X$ that induces on $X$ its original topology.

\begin{theorem}\label{t6}\cite{EN2013}
Let $X$ be a topological space. Then the collection $\mathscr{B}$ of sets $j_{2}(U)\cup k_{2}(U)$ for $U\in\mathscr{U}^{\ast}$ is a base of neighborhoods at the identity $e$ in $FP_{2}(X)$.
\end{theorem}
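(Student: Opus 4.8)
The plan is to describe the neighborhoods of the identity in $FP(X)$ by the standard Graev construction and then cut down to the length-$\leq 2$ part. Recall that the universal quasi-uniformity $\mathscr{U}^{\ast}$ is generated by the continuous quasi-pseudometrics on $X$, and that each continuous quasi-pseudometric $d$ on $X$ has a Graev extension $\hat{d}$ to an invariant continuous quasi-pseudometric on $FP(X)$; the free paratopological group topology is the supremum of the paratopological group topologies induced by the family of all such $\hat{d}$. Consequently the balls $B_{\hat{d}}(\varepsilon)=\{w\in FP(X):\hat{d}(e,w)<\varepsilon\}$, taken over all continuous quasi-pseudometrics $d$ and all $\varepsilon>0$, form a base of neighborhoods at $e$ in $FP(X)$, and their traces on $FP_{2}(X)$ form a base of neighborhoods at $e$ in $FP_{2}(X)$. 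It therefore suffices to prove two cofinality statements: \textbf{(II)} every $B_{\hat{d}}(\varepsilon)\cap FP_{2}(X)$ contains a set $j_{2}(U)\cup k_{2}(U)$; and \textbf{(I)} every $j_{2}(U)\cup k_{2}(U)$ contains a set of the form $B_{\hat{d}}(\varepsilon)\cap FP_{2}(X)$ intersected with one further explicit open neighborhood of $e$.

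For the easy half \textbf{(II)}, fix $d$ and $\varepsilon>0$ and set $U=\{(x,y)\in X\times X:d(x,y)<\varepsilon\}\in\mathscr{U}^{\ast}$. The connecting scheme that cancels the letter $x^{-1}$ against $y$ gives the upper estimate $\hat{d}(e,x^{-1}y)\leq d(x,y)$, and symmetrically $\hat{d}(e,yx^{-1})\leq d(x,y)$. Hence $(x,y)\in U$ yields $x^{-1}y,\,yx^{-1}\in B_{\hat{d}}(\varepsilon)$, so $j_{2}(U)\cup k_{2}(U)\subseteq B_{\hat{d}}(\varepsilon)\cap FP_{2}(X)$, as required.

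For the substantive half \textbf{(I)}, fix $U\in\mathscr{U}^{\ast}$ and pick a continuous quasi-pseudometric $d$ with $\{(x,y):d(x,y)<1\}\subseteq U$. To remove the length-$1$ words and the non-cancelling products I use the augmentation homomorphism: the constant map $X\to\mathbb{Z}$ with value $1$ extends, by the universal property of $FP(X)$, to a continuous homomorphism $\varphi:FP(X)\to\mathbb{Z}$ onto the discrete group $\mathbb{Z}$, and $W_{0}=\varphi^{-1}(0)$ is an open neighborhood of $e$. The set $W_{0}\cap FP_{2}(X)$ consists precisely of $e$ together with the augmentation-zero reduced words of length $2$, namely the products $x^{-1}y$ and $xy^{-1}$; in particular every length-$1$ word (augmentation $\pm 1$) and every non-cancelling product $xy$ or $x^{-1}y^{-1}$ (augmentation $\pm 2$) is excluded. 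Now consider the open neighborhood $O=B_{\hat{d}}(1)\cap W_{0}\cap FP_{2}(X)$ of $e$. For a surviving word $x^{-1}y\in O$ the exact Graev value $\hat{d}(e,x^{-1}y)=d(x,y)$ forces $d(x,y)<1$, whence $(x,y)\in U$ and $x^{-1}y=j_{2}(x,y)\in j_{2}(U)$; likewise $xy^{-1}=k_{2}(y,x)\in O$ forces $d(y,x)<1$, whence $(y,x)\in U$ and $xy^{-1}=k_{2}(y,x)\in k_{2}(U)$. Thus $O\subseteq j_{2}(U)\cup k_{2}(U)$, exhibiting $j_{2}(U)\cup k_{2}(U)$ as a neighborhood of $e$ and completing \textbf{(I)}.

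The main obstacle is the exact Graev computation underlying \textbf{(I)}, namely the lower bound $\hat{d}(e,x^{-1}y)\geq d(x,y)$ (and its mirror $\hat{d}(e,xy^{-1})\geq d(y,x)$): one must verify that among all connecting schemes for a cancelling length-$2$ word the cheapest is the single direct cancellation, so that no scheme beats $d(x,y)$. The augmentation homomorphism $\varphi$ is what makes this tractable, since it disposes of the length-$1$ words and of the non-cancelling length-$2$ words for free, leaving only the two cancelling patterns on which the Graev norm must be pinned down; the asymmetry of $d$ is then exactly absorbed by separating the pattern $x^{-1}y$ (governed by $d(x,y)$, giving $j_{2}$) from the pattern $xy^{-1}$ (governed by $d(y,x)$, giving $k_{2}$).
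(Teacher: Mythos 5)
The paper does not actually prove this statement --- it is quoted from Elfard and Nickolas \cite{EN2013} --- so there is no internal proof to compare against; I will judge your argument on its own. Half of it works: your step (I) correctly shows that each $j_{2}(U)\cup k_{2}(U)$ is a neighborhood of $e$ in $FP_{2}(X)$, granted the standard facts that the Graev extension $\hat{d}$ of a continuous quasi-pseudometric $d$ is a continuous \emph{invariant} quasi-pseudometric on $FP(X)$ extending $d$, and the augmentation homomorphism onto discrete $\mathbb{Z}$ neatly discards the length-$1$ and non-cancelling length-$2$ words. (Incidentally, the ``main obstacle'' you isolate is no obstacle: invariance gives $\hat{d}(e,x^{-1}y)=\hat{d}(x,y)=d(x,y)$ and $\hat{d}(e,xy^{-1})=\hat{d}(y,x)=d(y,x)$ at once, with no analysis of connecting schemes.)

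The genuine gap is in the other half, namely that every neighborhood of $e$ in $FP_{2}(X)$ contains some $j_{2}(U)\cup k_{2}(U)$. This rests entirely on your opening premise that the free paratopological group topology is the supremum of the topologies induced by the Graev extensions $\hat{d}$, i.e.\ that the balls $B_{\hat{d}}(\varepsilon)$ form a base at $e$ in $FP(X)$. That premise is false in general for the non-abelian free group: each $\hat{d}$ is invariant, so the topology they jointly generate has a base of invariant neighborhoods at $e$ and would force $FP(X)$ to be balanced, which free (para)topological groups are not except in special cases; this is exactly why the descriptions in \cite{EN2012,EN2013} use neighborhoods built from \emph{sequences} of entourages of $\mathscr{U}^{\ast}$ rather than single invariant balls. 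Even the weaker statement you actually need --- that the traces $B_{\hat{d}}(\varepsilon)\cap FP_{2}(X)$ form a base at $e$ in the subspace $FP_{2}(X)$ --- is, by your own steps (I) and (II), equivalent to the theorem being proved, so asserting it without proof makes the argument circular. To close the gap you must start from the known neighborhood base at $e$ of $FP(X)$ and show that its trace on $FP_{2}(X)$ is refined by the sets $j_{2}(U)\cup k_{2}(U)$; that is the substantive content of the Elfard--Nickolas proof.
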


\begin{theorem}\label{t8}\cite{LFC2012}
Let $X$ be a topological space. Then the collection $\mathscr{B}$ of sets $W(U)$ for $U\in\mathscr{U}^{\ast}$ is a base of neighborhoods at the identity $e$ in $AP_{2}(X)$, where $W(U)=\{-x+y: (x, y)\in U\}$.
\end{theorem}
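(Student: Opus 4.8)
The plan is to prove the two halves separately: (a) every neighbourhood of the identity $0$ in $AP_2(X)$ contains some $W(U)$, and (b) each $W(U)$ is itself a neighbourhood of $0$ in $AP_2(X)$. The entire difficulty is concentrated at the point $0$: Theorem~\ref{t5} already exhibits neighbourhood bases at every \emph{nonzero} reduced word of length $\le 2$, and one checks that those bases are compatible with the abelianising map, so the only missing case is the identity, which is exactly what this statement supplies.

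For (a) I would route through the free nonabelian group and lean on Theorem~\ref{t6}. Let $\varphi\colon FP(X)\to AP(X)$ be the canonical continuous abelianising homomorphism, and let $\varphi_2$ be its restriction $FP_2(X)\to AP_2(X)$. Since $\varphi(x^{-1}y)=\varphi(yx^{-1})=-x+y$, the two maps $j_2$ and $k_2$ collapse under $\varphi_2$, giving $\varphi_2(j_2(U)\cup k_2(U))=W(U)$ for every $U$. Now given a neighbourhood $N$ of $0$ in $AP_2(X)$, continuity of $\varphi_2$ makes $\varphi_2^{-1}(N)$ a neighbourhood of $e$ in $FP_2(X)$; by Theorem~\ref{t6} there is $U\in\mathscr U^\ast$ with $j_2(U)\cup k_2(U)\subseteq\varphi_2^{-1}(N)$, and applying $\varphi_2$ yields $W(U)\subseteq N$. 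This settles the cofinality half using nothing but continuity of $\varphi_2$ and the cited $FP_2$-description.

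For (b) I would produce, for a fixed $U\in\mathscr U^\ast$, an open set $O\ni 0$ in $AP(X)$ with $O\cap AP_2(X)\subseteq W(U)$. Two devices enter. First, the total-exponent map $\sigma(\sum_i\epsilon_i x_i)=\sum_i\epsilon_i$ is the extension, via the universal property of $AP(X)$, of the continuous map $X\to\mathbb Z$ (discrete) sending each $x$ to $1$; hence $\hat\sigma\colon AP(X)\to\mathbb Z$ is a continuous homomorphism and $C:=\hat\sigma^{-1}(0)$ is a clopen neighbourhood of $0$ with $C\cap AP_2(X)=\{0\}\cup\{-x+y:x,y\in X\}$. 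Thus $C$ discards at one stroke every ``monochromatic'' length-$2$ word ($x+y$, $2x$, single letters, and their inverses), leaving only differences. Second, since $\mathscr U^\ast$, like every quasi-uniformity, has a base of quasi-pseudometric entourages, I would choose a continuous quasi-pseudometric $d$ on $X$ with $\{(x,y):d(x,y)<1\}\subseteq U$ and take its Graev extension $\hat d$ to $AP(X)$: an invariant continuous quasi-pseudometric with $\hat d|_{X\times X}=d$, so by invariance $\hat d(0,-x+y)=\hat d(x,y)=d(x,y)$. As $\hat d$ is continuous, the ball $O':=\{g:\hat d(0,g)<1\}$ is open; set $O:=O'\cap C$. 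For $g\in O\cap AP_2(X)$ the factor $C$ forces $g=0$ (which lies in $W(U)$) or $g=-x+y$, and then $d(x,y)=\hat d(0,g)<1$ gives $(x,y)\in U$, i.e.\ $g\in W(U)$. Hence $W(U)\supseteq O\cap AP_2(X)$ is a neighbourhood of $0$, and combined with (a) the family $\{W(U):U\in\mathscr U^\ast\}$ is a neighbourhood base at $0$.

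The main obstacle is the analytic engine behind (b): one must know that a continuous quasi-pseudometric on $X$ admits an invariant, continuous Graev extension to the free abelian paratopological group realising the exact value $\hat d(0,-x+y)=d(x,y)$, with open balls (equivalently, that the topology of $AP(X)$ is generated by such extensions). This is the quasi-pseudometric, abelian-paratopological analogue of Graev's extension theorem, and making it rigorous — verifying invariance and the triangle inequality for $\hat d$, and that no cheaper ``scheme'' pushes the norm of the two-letter word $-x+y$ below $d(x,y)$ — is where the real work lies; I would import it from the free-paratopological-group machinery of \cite{PN2006}. The point of the clopen set $C$ is precisely to sidestep the genuinely delicate issue of how $\hat d$ treats the identity on \emph{unbalanced} words such as $x+y$, so that only its clean value on differences is ever needed.
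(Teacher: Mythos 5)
The paper does not prove this statement: Theorem~\ref{t8} is quoted verbatim from \cite{LFC2012} and used as a black box, so there is no in-paper argument to compare yours against. Judged on its own, your outline is sound and correctly split. Half (a) is a complete proof as written: $\varphi_2(j_2(U)\cup k_2(U))=W(U)$, continuity of the abelianising homomorphism pulls a neighbourhood of $0$ back to a neighbourhood of $e$ in $FP_2(X)$, and Theorem~\ref{t6} finishes it. The clopen-set device $C=\hat\sigma^{-1}(0)$ in half (b) is also correct and is a genuinely clean way to kill all length-$\le 2$ words except the differences $-x+y$.

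The caveat is that half (b), as you yourself concede, delegates its entire substance to the existence of an invariant Graev-type extension $\hat d$ of a quasi-pseudometric $d$ to $AP(X)$ whose balls are neighbourhoods of $0$ and which satisfies $\hat d(0,-x+y)\ge\min\{d(x,y),1\}$; that extension theorem \emph{is} the technical core of \cite{LFC2012}, i.e.\ of the very source being proved, so your argument is an outline rather than an independent proof of the hard half. Two points there need explicit care. First, the value of $\hat d$ at a two-letter word is an infimum over schemes, and a scheme may route through the adjoined identity; you must extend $d$ to $X\cup\{0\}$ with $d(x,0)$ and $d(0,y)$ at least $1$ (or $1/2$ each), after which $\hat d(0,-x+y)<1$ does force $d(x,y)<1$ --- your remark about ``unbalanced words'' points at the wrong place, since $C$ already removes those; the danger is chaining through $0$ on the balanced word itself. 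Second, ``$\hat d$ is continuous, hence the ball is open'' should be justified via the universal property: $\hat d$ generates a paratopological group topology on the free abelian group in which $X$ embeds continuously (because $\hat d\restriction X\le d$), so the identity homomorphism from $AP(X)$ to that group is continuous and the balls are neighbourhoods of $0$. With those two points made precise, and the extension theorem imported, the argument closes.
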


The proof of the following theorem is a modification of \cite[Lemma 2.10]{T2003}. However, for the completeness, we give out the proof.

\begin{theorem}\label{l4}
Let $X$ be an arbitrary $T_{1}$ space and $\varphi: FP(X)\rightarrow AP(X)$ be the canonical homomorphism $FP(X)$ onto $AP(X)$. Then the restriction $\varphi_{2}|_{FP_{2}(X)}$ is a perfect mapping of $FP_{2}(X)$ onto $AP_{2}(X)$.
\end{theorem}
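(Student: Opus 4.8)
Denote by $p$ the map $\varphi_{2}|_{FP_{2}(X)}\colon FP_{2}(X)\to AP_{2}(X)$ in the statement. The plan is to verify the three defining properties of a perfect map—continuity, compactness of fibres, and closedness—of which only the last is serious. Continuity of $p$ is inherited from the continuity of the canonical homomorphism $\varphi$. For the fibres I would solve $\varphi(g)=z$ over reduced words $g$ of length $\leq 2$ for each reduced $z\in AP_{2}(X)$: one finds $p^{-1}(0)=\{e\}$, $p^{-1}(x)=\{x\}$, $p^{-1}(-x)=\{x^{-1}\}$, $p^{-1}(2x)=\{x^{2}\}$, $p^{-1}(-2x)=\{x^{-2}\}$, while for distinct $x,y$ the three length-two types give the two-point fibres $p^{-1}(x+y)=\{xy,yx\}$, $p^{-1}(x-y)=\{xy^{-1},y^{-1}x\}$ and $p^{-1}(-x-y)=\{x^{-1}y^{-1},y^{-1}x^{-1}\}$. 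Every fibre is finite, hence compact (and the same list shows $p$ is onto); this step is routine.

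For closedness I would use the criterion that $p$ is closed iff for every $z\in AP_{2}(X)$ and every open $U\supseteq p^{-1}(z)$ in $FP_{2}(X)$ there is an open $V\ni z$ in $AP_{2}(X)$ with $p^{-1}(V)\subseteq U$, and organize the verification by the reduced form of $z$, using the explicit neighbourhood bases of Theorems~\ref{t4} and \ref{t5}. The generic and hardest case is $z=x+y$ with $x\neq y$: here $U$ contains basic neighbourhoods $U_{1}U_{2}\ni xy$ and $U_{2}'U_{1}'\ni yx$ by Theorem~\ref{t4}; putting $A=U_{1}\cap U_{1}'$, $B=U_{2}\cap U_{2}'$ and taking the $AP_{2}(X)$-neighbourhood $V=A+B$ of Theorem~\ref{t5}, the point is that any $g$ with $\varphi(g)\in A+B$ must lie in $AB\cup BA$, so $p^{-1}(V)\subseteq AB\cup BA\subseteq U_{1}U_{2}\cup U_{2}'U_{1}'\subseteq U$. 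The essential difficulty is that a two-point fibre forces one to \emph{glue} two different $FP_{2}(X)$-neighbourhoods into a single $AP_{2}(X)$-neighbourhood, which succeeds precisely because abelianization identifies the two orderings $ab$ and $ba$. The case $z=2x$ is the same computation with $A=B$, and $z=x-y$ is analogous except that Theorem~\ref{t4} pins the second coordinate to $\{y\}$; there I would choose $A\ni x$ with $y\notin A$ (using $T_{1}$), so that no $a-y\in V=A-y$ collapses to $0$—this is exactly where the $T_{1}$ hypothesis enters.

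The remaining cases reduce to isolation or to a cleanly embedded stratum, which I would control with the degree homomorphism $\sigma\colon AP(X)\to\mathbb{Z}$, the continuous extension of the constant map $x\mapsto 1$ with $\mathbb{Z}$ discrete. Since the length-two strata have degrees $\pm 2,0$, one has $\sigma^{-1}(1)\cap AP_{2}(X)=X$ and $\sigma^{-1}(-1)\cap AP_{2}(X)=-X$, both free of length-two words; Theorem~\ref{t5} already gives singleton neighbourhood bases for the all-negative words, so $-2x$ and $-x-y$ are isolated in $AP_{2}(X)$, and together with $\sigma$ the same holds for $-x$, making those cases trivial once $U$ contains the fibre. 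For $z=x$ the degree argument confines neighbourhoods of $x$ to the canonical copy of $X$, on which $p$ is the identity and $\varphi^{-1}(X)\cap FP_{2}(X)=X$, so $V$ can be taken inside $X$. Finally, for $z=0$ I would pass to the quasi-uniformity bases: by Theorem~\ref{t6} pick $\Phi\in\mathscr{U}^{\ast}$ with $j_{2}(\Phi)\cup k_{2}(\Phi)\subseteq U$ and set $V=W(\Phi)$, a neighbourhood of $0$ by Theorem~\ref{t8}; since $\varphi(j_{2}(a,b))=\varphi(k_{2}(a,b))=-a+b$, an element over $-a+b$ with $a\neq b$ can only be $a^{-1}b=j_{2}(a,b)$ or $ba^{-1}=k_{2}(a,b)$, while $a=b$ forces $e$, giving $p^{-1}(W(\Phi))\subseteq\{e\}\cup j_{2}(\Phi)\cup k_{2}(\Phi)\subseteq U$.

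I expect the main obstacle to be closedness, and within it the bookkeeping that a small $AP_{2}(X)$-neighbourhood of $z$ pulls back into the prescribed $U$ without sweeping in unexpected words of \emph{lower} length. The two devices that defeat this—the $T_{1}$-separation that keeps $a-y$ from collapsing to $0$, and the degree homomorphism that confines short words to their own strata—are exactly what make the gluing arguments for the two-point fibres and the quasi-uniformity argument at $z=0$ go through.
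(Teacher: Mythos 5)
Your proposal is correct and follows essentially the same route as the paper: compact (two-point) fibres, closedness verified pointwise via the tube criterion, with the neighbourhood bases of Theorems~\ref{t4} and \ref{t5} (intersected over the two orderings, and with the disjointness safeguard preventing collapse to $0$ in the mixed-sign case) handling words of positive length, and Theorems~\ref{t6} and \ref{t8} handling the identity. Your explicit use of the degree homomorphism to isolate the length-one stratum is just a spelled-out version of the paper's observation that $X\cup X^{-1}$ is clopen and mapped homeomorphically onto $X\cup(-X)$.
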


\begin{proof}
Clearly, for each $y\in AP_{2}(X)$, we have $|\varphi_{2}^{-1}(y)|\leq 2$, hence the fibers of $\varphi_{2}$ are compact. Next we shall show that $\varphi_{2}$ is closed at every point $g\in AP_{2}(X)$.

{\bf Case 1:} $g\in AP_{1}(X)\setminus\{0\}$, where 0 denotes the neutral element of $AP(X)$.

Obviously, the set $Y=X\cup X^{-1}$ is clopen in $FP_{2}(X)$, $Z=X\cup (-X)$ is clopen in $AP_{2}(X)$, $\varphi_{2}^{-1}(Z)=Y$ and the restriction of $\varphi_{2}$ to $Y$ is a homeomorphism from $Y$ onto $Z$. Therefore, $\varphi_{2}$ is closed at every point of $Z$.

{\bf Case 2:} $g\in AP_{2}(X)\setminus AP_{1}(X)$.

Let $g=\varepsilon_{1}x_{1}+\varepsilon_{2}x_{2}\neq 0$, where $x_{1}, x_{2}\in X$ and $\varepsilon_{1}, \varepsilon_{2}=\pm 1$. Then we have $$\varphi_{2}^{-1}(g)=\{x_{1}^{\varepsilon_{1}}x_{2}^{\varepsilon_{2}}, x_{2}^{\varepsilon_{2}}x_{1}^{\varepsilon_{1}}\}.$$
Let $O$ be an open set in $FP_{2}(X)$ containing $\varphi_{2}^{-1}(g)$. By Theorem~\ref{t4}, there exist subsets $U_{1}$  and $U_{2}$ in $X$ satisfying the following conditions (1)-(3):

(1) For each $i=1, 2$, if $\varepsilon_{i}=1$ then $U_{i}$ is an open neighborhood of $x_{i}$ in $X$; if $\varepsilon_{i}=-1$ then $U_{i}=\{x_{i}\}$.

(2) $U_{1}^{\varepsilon_{1}}U_{2}^{\varepsilon_{2}}\cup U_{2}^{\varepsilon_{2}}U_{1}^{\varepsilon_{1}}\subset O$;

(3) If $\varepsilon_{1}+\varepsilon_{2}=0$, then $U_{1}\cap U_{2}=\emptyset$.

By Theorem~\ref{t5}, the set $V=\varepsilon_{1}U_{1}+\varepsilon_{2}U_{2}$ is an open neighborhood of $g$ in $AP_{2}(X)$, and one easily checks that $$\varphi_{2}^{-1}(V)=U_{1}^{\varepsilon_{1}}U_{2}^{\varepsilon_{2}}\cup U_{2}^{\varepsilon_{2}}U_{1}^{\varepsilon_{1}}\subset O.$$Therefore, $\varphi_{2}$ is closed at $g$.

{\bf Case 3:} $g=0$.

Clearly, we have $\varphi_{2}^{-1}(0)=\{e\}$. Let $O$ be a neighborhood of $e$ in $FP_{2}(X)$. By Theorem~\ref{t6}, there exists $U\in\mathscr{U}^{\ast}$ such that $j_{2}(U)\cup k_{2}(U)\subset O.$ It follows from Theorem~\ref{t8} that $W(U)=\{-x+y: (x, y)\in U\}$  is an open neighborhood of 0 in $AP_{2}(X)$. Obviously, we have $\varphi_{2}^{-1}(W)=j_{2}(U)\cup k_{2}(U)\subset O$. Therefore, $\varphi_{2}$ is closed at $0$.
\end{proof}

{\bf Remark \cite{T2003}}: It is worth mentioning that Theorem~\ref{l4} cannot be generalized to $n=3$. Indeed, let $X$ be an infinite $T_{1}$-space. Denote by $\varphi_{3}$ the restriction of the canonical homomorphism $\varphi: FP(X)\rightarrow AP(X)$ to $FP_{3}(X)$. Take an arbitrary point $x_{0}\in X$. Then $\varphi_{3}^{-1}(x_{0})$ contains the closed and discrete set $\{x x_{0}x^{-1}: x\in X\}$, hence $\varphi_{3}^{-1}(x_{0})$ is not compact. Therefore, $\varphi_{3}$ fails to be a perfect mapping.

It is well known that countable type and local compactness are both invariant and inverse invariant properties of perfect mappings, hence we have the following result.

\begin{proposition}\label{p7}
Let $X$ be an arbitrary Hausdorff space. Then we have
\begin{enumerate}
\item If $AP_{2}(X)$ is of pointwise countable type, then $FP_{2}(X)$ is of pointwise countable type;

\item $AP_{2}(X)$ is of countable type if and only if $FP_{2}(X)$ is of countable type;

\item $AP_{2}(X)$ is locally compact if and only if $FP_{2}(X)$ is locally compact.
\end{enumerate}
\end{proposition}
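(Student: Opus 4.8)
The plan is to read all three statements off a single structural fact. Since the hypothesis makes $X$ Hausdorff, hence $T_{1}$, Theorem~\ref{l4} applies and provides a perfect surjection $p: FP_{2}(X)\to AP_{2}(X)$, namely the restriction to $FP_{2}(X)$ of the canonical homomorphism $\varphi: FP(X)\to AP(X)$. All three conclusions then come from the behaviour of the relevant properties under $p$, exactly the behaviour recalled just before the statement: countable type and local compactness are each both invariant and inverse invariant under perfect mappings, while for the weaker pointwise version only the inverse-invariant half is needed.

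For (1) I would establish the inverse invariance of pointwise countable type directly, in the spirit of Proposition~\ref{p6} but localised to a single point. Fix $w\in FP_{2}(X)$ and put $y=p(w)$. By pointwise countable type of $AP_{2}(X)$, choose a compact set $K\ni y$ with a neighbourhood base $\{U_{n}: n\in\mathbb{N}\}$ in $AP_{2}(X)$. Perfectness of $p$ makes $p^{-1}(K)$ a compact set containing $w$, and the same closed-map argument as in Proposition~\ref{p6} shows that $\{p^{-1}(U_{n}): n\in\mathbb{N}\}$ is a neighbourhood base for $FP_{2}(X)$ at $p^{-1}(K)$. Thus $w$ lies in a compact set of countable character, and $FP_{2}(X)$ is of pointwise countable type. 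Only this direction is asserted, since the perfect image need not preserve the pointwise version.

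For (2) and (3) I would invoke the two-directional stability recalled above. The implications ``$AP_{2}(X)$ has the property $\Rightarrow FP_{2}(X)$ has it'' are the inverse-invariant halves, proved just as in part (1): for countable type one runs the Proposition~\ref{p6} computation for every compact $F\subset FP_{2}(X)$ after enlarging $p(F)$ inside $AP_{2}(X)$ to a compact set of countable character, and for local compactness one notes that the $p$-preimage of a compact neighbourhood is a compact neighbourhood. The converse implications are the image halves, using that the perfect image of a space of countable type (resp. of a locally compact space) is again of countable type (resp. locally compact). Putting the two directions together yields the two equivalences.

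The argument is essentially bookkeeping, so I expect no genuine obstacle; the only step needing real care is the image (forward) direction in (2) and (3), where closedness of $p$ is used more essentially than in the inverse direction. As these forward statements are precisely the classical perfect-mapping theorems cited just before the proposition, taking them as known reduces the entire proof to the single observation that $p$ is perfect.
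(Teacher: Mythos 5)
Your proposal is correct and follows exactly the paper's route: the paper derives all three items from Theorem~\ref{l4} (perfectness of $\varphi_{2}|_{FP_{2}(X)}$) together with the remark that countable type and local compactness are both invariant and inverse invariant under perfect maps, with only the inverse-invariant half available for the pointwise version. You merely spell out the inverse-invariance argument (via the Proposition~\ref{p6} computation) that the paper leaves implicit.
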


\begin{theorem}
Let $X$ be an arbitrary Hausdorff $\mu$-space. If $AP_{2}(X)$ or $FP_{2}(X)$ is locally compact, then $X$ is homeomorphic to the topological sum of a compact space and a discrete space.
\end{theorem}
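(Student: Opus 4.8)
The plan is to reduce everything to the case that $FP_2(X)$ is locally compact, to extract from this that the set $Y=X\setminus I(X)$ of non-isolated points is compact, and then to use local compactness of $X$ itself to split off a compact clopen set containing $Y$, leaving a discrete remainder.

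First I would apply Proposition~\ref{p7}(3): since $AP_2(X)$ is locally compact if and only if $FP_2(X)$ is, we may assume in all cases that $FP_2(X)$ is locally compact. A locally compact space is of pointwise countable type, so $FP_2(X)$ is of pointwise countable type; as $X$ is a $\mu$-space, the corollary to Theorem~\ref{t7} then gives that $Y=X\setminus I(X)$ is compact. Next I would transfer local compactness to $X$ itself: because $X$ is a closed subspace of the locally compact space $FP_2(X)$ (if $x\in X$ has a compact neighbourhood $V$ in $FP_2(X)$, then $V\cap X$ is closed in $V$, hence compact, and is a neighbourhood of $x$ in $X$), and $X$ is Hausdorff by hypothesis, the space $X$ is locally compact Hausdorff.

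The heart of the proof is a short geometric step. Since $Y$ is compact and $X$ is locally compact Hausdorff, $Y$ has an open neighbourhood $U$ with $\overline{U}$ compact. The boundary $\partial U=\overline{U}\setminus U$ is disjoint from $Y$ (as $Y\subset U$), so it is a compact subset of $X\setminus Y=I(X)$; consisting of isolated points and being compact, it is discrete, hence finite. Therefore $\overline{U}=U\cup\partial U$ is the union of the open set $U$ with finitely many isolated (so open) points, whence $\overline{U}$ is open; being a closure it is also closed. Thus $\overline{U}$ is a compact clopen set containing $Y$. Putting $C=\overline{U}$ and $D=X\setminus\overline{U}$, we obtain $X=C\oplus D$ with $C$ compact and $D\subset I(X)$ discrete, which is the desired decomposition.

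I expect the main difficulty to be the two standard transfers invoked at the start rather than the decomposition itself: namely, that local compactness of $FP_2(X)$ forces pointwise countable type (so that Theorem~\ref{t7} and its corollary become available), and that $X$ inherits local compactness as a closed subspace of $FP_2(X)$. Both rest on $X$ being closed in $FP_2(X)$ and on having enough separation present in $FP_2(X)$; once these facts are secured, the clopen-splitting argument above is entirely elementary.
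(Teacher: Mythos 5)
Your proposal is correct and takes essentially the same route as the paper: apply Proposition~\ref{p7} and Theorem~\ref{t7} (with the $\mu$-space hypothesis) to get $X\setminus I(X)$ compact, note that $X$ is locally compact, and then split off a compact clopen piece. The paper dismisses the final decomposition as ``easy to see''; your explicit argument --- the boundary of a relatively compact open neighbourhood of $X\setminus I(X)$ is a compact set of isolated points, hence finite, so $\overline{U}$ is clopen --- is a correct way to carry out exactly that step.
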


\begin{proof}
By Theorem~\ref{t7} and Proposition~\ref{p7}, the set $X\setminus I(X)$ is bounded in $X$, then $X\setminus I(X)$ is compact since $X$ is a $\mu$-space. Since $AP_{2}(X)$ or $FP_{2}(X)$ is locally compact, $X$ is local compact. Then it is easy to see that $X$ is homeomorphism to the topological sum of a compact space and a discrete space.
\end{proof}

The following two questions are still unknown.

\begin{question}
If $X$ is homeomorphic to the topological sum of a compact space and a discrete space,  is $AP_{2}(X)$ or $FP_{2}(X)$ locally compact?
\end{question}

\begin{question}
Can we find a topological property $\mathcal{P}$ such that $AP_{2}(X)$ or $FP_{2}(X)$ is of point-countable type or countable type if and only if $X$ has the property $\mathcal{P}$?
\end{question}

\end{document}